\newtheorem{theorem}{Theorem}[section]
\newtheorem{lemma}[theorem]{Lemma}
\theoremstyle{definition}
\newtheorem{definition}[theorem]{Definition}
\newtheorem{proposition}[theorem]{Proposition}
\newtheorem{corollary}[theorem]{Corollary}
\newtheorem{question}[theorem]{Question}
\theoremstyle{remark}
\newtheorem{remark}[theorem]{Remark}
\theoremstyle{notation}
\theoremstyle{claim}
\numberwithin{equation}{section}
\begin{document}

\title{On Schauder equivalence relations }

%    Information for first author
\author{Xin Ma}
%    Address of record for the research reported here
\address{Department of Mathematics,
         Texas A\&M University,
         Collage Station, TX 77843}

\email{dongodel@math.tamu.edu}

%    General info
\subjclass[2010]{03E15, 46B45, 46B15}

\date{July 20, 2014.}

%\dedicatory{This paper is dedicated to our advisors.}

\keywords{Equivalence relations; Borel reducibility; Banach spaces; Tsirelson spaces.}

\begin{abstract}
In this paper, we study Schauder equivalence relations, which are Borel equivalence relations generated by Banach spaces with basic sequences. We prove that the set of equivalence relations generated by basic sequences has boundaries. In addition, we prove that both $\mathbb{R}^\mathbb{N}/l_p$ and $\mathbb{R}^\mathbb{N}/c_0$ are not reducible to the equivalence relation generated by Tsirelson space $T$ with the unit vector basis $\{t_n\}$. We also shows that Borel equivalence relations generated by $\alpha-$Tsirelson spaces are mutually incompatible. Based on this argument, we show that any basis of Schauder equivalence relations must be of cardinal $2^{\omega}$.

\end{abstract}

\maketitle

\section{Introduction}

The Borel reducibility hierarchy of equivalence relations on Polish spaces now becomes the main focus of invariant descriptive set theory, which has been an essential branch of the descriptive set theory. One of the most important kind of equivalence relations is the orbit equivalence relation generated by actions of Polish groups. A lot of important results and essential tools have been investigated. A separable Banach space with its norm topology can be regarded as a Polish abelian group. By a well-known theorem of Mazur (see Theorem 1.a.5 in \cite{L-T}), it admits a basic sequence. Then the subspace generated by such a sequence can be regarded as a Borel subgroup of $\mathbb{R}^\mathbb{N}$ ($\mathbb{N}=\{0,1,2\ldots\}$). Then, its natural action on $\mathbb{R}^\mathbb{N}$ generates a Borel equivalence relation. Some important situations have been studied thoroughly. For example, Dougherty and Hjorth proved that for any $p,q\in [1,\infty]$,
$$\mathbb{R}^\mathbb{N}/l_p\leq_B \mathbb{R}^\mathbb{N}/l_q \Longleftrightarrow p\leq q,$$
while $\mathbb{R}^\mathbb{N}/l_p$ and $\mathbb{R}^\mathbb{N}/c_0$ are Borel incomparable. ( see \cite{D-H} and \cite{H}).

Some kind of general form of the equivalence relations were further investigated successfully. Professor Ding introduced a kind of $l_p-$like equivalence relations $E((X_n);p)$. Let $(X_n, d_n),n<\omega$ is a sequence of pseudo-metric spaces with $p \geq1$,  He set that for any $x,y\in \Pi_{n < \omega}X_n$,
$$(x,y)\in E((X_n);p)\Longleftrightarrow \sum\limits_{n<\omega}d_n(x(n),y(n))<\infty,$$
He found that the reducibility between such equivalence relations are closely related to finitely H\"{o}lder(${\alpha}$) embedding by providing criteria of the reducibility. His theorem provides a lot of reducibility and non-reducibility results in the equivalence relations related to classical Banach spaces of the form $E((L_r);p)$ and $E((c_0);q)$. (see \cite{D2} and \cite{D1}).

Another kind of $l_p-$like equivalence relations $\mathrm{E}_f$ was introduced by M\'{a}trai. For any $f \colon [0,1]\rightarrow \mathbb{R}^+$ He considered the following relation on $[0,1]^\mathbb{N}$:
$$x \mathrm{E}_f y \Longleftrightarrow \sum\limits_{i<\omega}f(|x(n)-y(n)|)<\infty.$$
He embedded any a liner chain of the order $P(\omega)/\mathrm{Fin}$ into the set of the equivalence relations between $\mathbb{R}^\mathbb{N}/l_p$ and $\mathbb{R}^\mathbb{N}/l_q$, where $1 \leq p<q<\infty$, to answer a problem of Gao in negative(See \cite{M}).

More recently, Yin \cite{Y} has moved further to embed whole $P(\omega)/\mathrm{Fin}$ into the set of the equivalence relations between $\mathbb{R}^\mathbb{N}/l_p$ and $\mathbb{R}^\mathbb{N}/l_q$ to show the reducibility order of Borel equivalence relations between $\mathbb{R}^\mathbb{N}/l_p$ and $\mathbb{R}^\mathbb{N}/l_q$ are rather complex.

In this paper, we would like to study Schauder equivalence relations, which are Borel equivalence relations generated by Banach spaces with basic sequences. we firstly prove following two theorems in general. We denote $E(X,(x_n))$ the equivalence relation generated by Banach space $X$ with the basic sequence $\{x_n\}$. For some terminology in functional analysis, a \emph{subspace} $Y$ of a Banach space $X$ always means that $Y$ is a closed subspace in $X$.

\begin{theorem}
Let $Y$ be a Banach space, with $\{y_n\}$ being a Schauder basis. If $X$ is a subspace of $Y$, then there is a subspace $Z$ of $X$, with a basis $\{z_n\}$, such that $E(Z,(z_n)) \leq_B E(Y,(y_n))$.

Furthermore, If $X$ is a subspace of $Y$, with a normalized subsymmetric basic sequence $\{x_n\}$, then, $E(X,(x_n)) \leq_B E(Y,(y_n))$.
\end{theorem}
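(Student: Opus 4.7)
The plan is to use the Bessaga--Pelczynski selection principle to locate inside $X$ a basic sequence equivalent to a block basic sequence of $\{y_n\}$, and then to lift that algebraic equivalence to a Borel reduction by ``spreading'' coefficients across blocks.

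Since $X$ is an infinite-dimensional closed subspace of $Y$, the Bessaga--Pelczynski selection principle produces a normalized basic sequence $\{z_n\} \subset X$ equivalent to some block basic sequence $u_n = \sum_{j \in I_n} a_j y_j$ of $\{y_n\}$, where $I_1 < I_2 < \cdots$ are successive pairwise disjoint finite subsets of $\mathbb{N}$. Let $Z = \overline{\mathrm{span}}\{z_n\}$; this is the subspace claimed in the statement. I would then define $\varphi : \mathbb{R}^\mathbb{N} \to \mathbb{R}^\mathbb{N}$ by $\varphi(c)_j = c_n a_j$ if $j \in I_n$, and $\varphi(c)_j = 0$ otherwise. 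This $\varphi$ is linear and continuous, hence Borel, and satisfies $\varphi(x) - \varphi(y) = \varphi(x-y)$.

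The heart of the argument is the biconditional $(x,y) \in E(Z,(z_n)) \iff (\varphi(x), \varphi(y)) \in E(Y,(y_n))$. The left-hand side is convergence of $\sum_n (x_n - y_n) z_n$, which by equivalence of $\{z_n\}$ and $\{u_n\}$ is the same as convergence of $\sum_n (x_n - y_n) u_n$. Expanded out, this is precisely the series $\sum_j \varphi(x-y)_j\, y_j$ sampled at the endpoints $\max I_n$. To bridge the gap between ``sampled'' and ``full'' partial sums of the $y$-expansion, one invokes the basis constant $K$ of $\{y_n\}$: any initial segment of $c_n u_n$ inside the block $I_n$ has norm at most $K\|c_n u_n\|$, and $\|c_n u_n\| \to 0$ whenever the coarse series converges. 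This simultaneous estimate yields both directions of the convergence equivalence and hence $E(Z,(z_n)) \leq_B E(Y,(y_n))$.

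For the furthermore part, I would again apply Bessaga--Pelczynski inside $X$ to obtain a subsequence $\{x_{n_k}\}$ of $\{x_n\}$ equivalent to a block basic sequence $\{u_k\}$ of $\{y_n\}$. Subsymmetry of $\{x_n\}$ means $\{x_n\}$ and $\{x_{n_k}\}$ are already equivalent, so by transitivity $\{x_n\}$ itself is equivalent to $\{u_k\}$. The construction of $\varphi$ from the previous paragraph, carried out with $\{x_n\}$ in place of $\{z_n\}$ and with this $\{u_k\}$, then witnesses $E(X,(x_n)) \leq_B E(Y,(y_n))$. I expect the main technical obstacle to be exactly the convergence bookkeeping inside blocks described above; once that estimate is recorded, both halves of the theorem follow from the same template, the only extra ingredient in the second part being the use of subsymmetry to replace ``some subsequence'' by ``the sequence itself.''
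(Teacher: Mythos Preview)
Your argument for the first part is correct and matches the paper's: the existence of a basic sequence in $X$ equivalent to a block basis of $\{y_n\}$ is Proposition~1.a.11 of Lindenstrauss--Tzafriri (cited in the paper as Proposition~3.2), and your spreading map $\varphi$ is exactly the reduction recorded in Lemma~3.1.

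The gap is in the ``furthermore'' part. The form of the Bessaga--Pelczynski selection principle that produces a \emph{subsequence} $\{x_{n_k}\}$ of $\{x_n\}$ equivalent to a block basis of $\{y_n\}$ requires that $y_m^*(x_k)\to 0$ as $k\to\infty$ for each fixed $m$ (condition~(2) of Proposition~1.a.12 in \cite{L-T}, quoted in the paper as Proposition~3.4). This is automatic when $\{x_n\}$ is weakly null, but a normalized subsymmetric basic sequence need not be weakly null: for instance, take $Y=l_1$ with its unit vector basis and $x_k=\tfrac{1}{2}(e_1+e_{k+1})$; then $\{x_k\}$ is normalized, unconditional, and equivalent to the $l_1$ basis (hence subsymmetric), yet $y_1^*(x_k)=\tfrac{1}{2}$ for every $k$, so the selection principle does not apply as stated. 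The paper closes this gap by invoking the dichotomy that a normalized subsymmetric basic sequence is either equivalent to the unit vector basis of $l_1$ or weakly null. In the weakly null case your argument goes through verbatim; in the $l_1$ case one instead appeals to James's theorem (Theorem~3.3 in the paper) to produce a block basis of $\{y_n\}$ equivalent to the $l_1$ basis, and hence to $\{x_n\}$ itself, after which the same spreading construction gives the reduction. Without this case split your proof of the second assertion is incomplete.
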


\begin{theorem}
Both $X$ and $Y$ are Banach spaces. $\{x_n\}$ and $\{y_n\}$ are normalized basic sequences of $X$ and $Y$, respectively.
If two following conditions hold:

{\rm (1)}\quad for all subsequences $\{x_{b_n}\}$ of $\{x_n\}$, $\{x_{b_n}\}$ does not dominate the $\{y_n\}$, and

{\rm (2)}\quad $\{y_n\}$ is lower semi-homogeneous,

then $E(X,(x_n)) \nleq_B E(Y,(y_n))$.
\end{theorem}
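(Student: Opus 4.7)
The plan is to argue by contradiction, adapting the Baire-category-plus-perturbation strategy that Dougherty and Hjorth used to separate $\mathbb{R}^\mathbb{N}/l_p$ from $\mathbb{R}^\mathbb{N}/c_0$. Suppose a Borel reduction $\theta\colon\mathbb{R}^\mathbb{N}\to\mathbb{R}^\mathbb{N}$ from $E(X,(x_n))$ to $E(Y,(y_n))$ exists. Since $\theta$ is Borel, it is continuous on a dense $G_\delta$ set $G$. Both equivalence relations are translation-invariant coset relations of additive Borel subgroups of $\mathbb{R}^\mathbb{N}$, so for any $a_0$ the map $\tilde\theta(a):=\theta(a+a_0)-\theta(a_0)$ is again a Borel reduction. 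A Kuratowski--Ulam argument lets me pick $a_0\in G$ for which $\{\Delta:a_0+\Delta\in G\}$ is comeager, so I may work under the simplifying assumption that $\tilde\theta(0)=0$, $\tilde\theta$ is continuous at $0$ in the product topology, and a comeager set of finite-support perturbations lies in the continuity set.

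Next I would exploit condition~(1) to manufacture small test perturbations. For every strictly increasing sequence $b_1<b_2<\cdots$ and every $C>0$, the failure of $\{x_{b_n}\}$ to dominate $\{y_n\}$ produces finite scalars $(\alpha_i)_{i=1}^{N}$ with $\|\sum_i\alpha_iy_i\|_Y > C\,\|\sum_i\alpha_i x_{b_i}\|_X$. Iterating with $C=k$ and rescaling, I would recursively choose indices $b^k_1<\cdots<b^k_{N_k}$ (with $b^k_1\to\infty$) and scalars $(\alpha^k_i)$ such that $\|\sum_i\alpha^k_i x_{b^k_i}\|_X\le 2^{-k}$ while $\|\sum_i\alpha^k_i y_i\|_Y\ge k$. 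Let $\Delta^k\in\mathbb{R}^\mathbb{N}$ be the finitely supported vector with $\Delta^k_{b^k_i}=\alpha^k_i$. Then $\sum_n\Delta^k_n x_n$ is the finite sum $\sum_i\alpha^k_i x_{b^k_i}$ and converges trivially in $X$, so $\Delta^k$ lies in the $E(X,(x_n))$-class of $0$; moreover $\Delta^k\to 0$ coordinatewise. Consequently each $\tilde\theta(\Delta^k)$ lies in the $E(Y,(y_n))$-class of $0$, i.e.\ $\sum_n\tilde\theta(\Delta^k)_n y_n$ converges in $Y$, and by continuity $\tilde\theta(\Delta^k)\to 0$ coordinatewise.

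The final step is where condition~(2) enters: the lower semi-homogeneity of $\{y_n\}$ should yield a modulus converting coordinatewise smallness of a vector in the $E(Y,(y_n))$-class of $0$ into smallness of its $Y$-norm, forcing $\|\sum_n\tilde\theta(\Delta^k)_n y_n\|_Y\to 0$. Diagonalising the construction of $\Delta^k$ against the continuity modulus of $\tilde\theta$ and then playing it off against the lower bound $\|\sum_i\alpha^k_i y_i\|_Y\ge k$ should yield a quantitative contradiction. The main obstacle is exactly this final comparison: $\tilde\theta$ is merely an arbitrary continuous map in the product topology, and transferring the explicit quantitative failure of domination on the $X$-side into a failure on the $Y$-side of the reduction is precisely the task that lower semi-homogeneity must accomplish. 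Making this bridge precise, rather than the mechanical construction of perturbations, is the real heart of the theorem.
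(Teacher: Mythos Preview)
Your outline has the right starting point (Dougherty--Hjorth Baire-category machinery) but misses the key structural step, and your intended use of lower semi-homogeneity is not what that hypothesis actually delivers.

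\textbf{The missing device: a modular reduction, which \emph{determines} the subsequence.} In the paper, the Baire-category argument is not used merely to get continuity at one point; it is iterated (Claims~1--3) to replace $\theta$ by a reduction $\theta'$ of the \emph{modular} form
\[
\theta'(u)=f_0(u(0))^{\smallfrown}f_1(u(1))^{\smallfrown}f_2(u(2))^{\smallfrown}\cdots,
\]
where $f_j$ sends a single scalar to a finite block supported on $(l_j,l_{j+1}]$. Building this form forces a specific subsequence $\{x_{b_n}\}$: the $b_n$ are produced by the category argument itself, and $\theta'$ witnesses $E(X,(x_{b_n}))\le_A E(Y,(y_n))$. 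Only \emph{after} this subsequence has been fixed does one invoke condition~(1) for that particular $\{x_{b_n}\}$, obtaining a single scalar sequence $(\delta_i)$ with $\sum_i\delta_i x_{b_i}$ convergent but $\sum_i\delta_i y_i$ divergent. Your proposal reverses the order: you pick subsequences $b_i^k$ freely at the outset, but then there is no link whatsoever between your coefficients $(\alpha_i^k)$ (sitting at positions $1,\dots,N_k$ of $\{y_n\}$) and the output $\tilde\theta(\Delta^k)$ of an arbitrary Borel map. That link is exactly what the modular form supplies.

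\textbf{What lower semi-homogeneity actually does.} It does \emph{not} give a modulus turning coordinatewise smallness into $Y$-norm smallness; for $l_p$ (which is lower semi-homogeneous) the vectors $e_k$ go to $0$ coordinatewise with constant norm, so no such modulus exists. In the paper, after the modular reduction and a pigeonhole step (Claim~4) one obtains normalized blocks $s_j=S_j/\|S_j\|$ of $\{y_n\}$ with $\|S_j\|\ge C\delta_j$. Lower semi-homogeneity says $\{s_j\}\gg\{y_j\}$, so convergence of $\sum_j\|S_j\|s_j$ would force convergence of $\sum_j\|S_j\|y_j$, hence (by unconditionality, which lower semi-homogeneity implies) of $\sum_j\delta_j y_j$. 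Since $(\delta_j)$ was chosen to make this last series diverge, $\sum_j S_j=\theta'(u)-\theta'(\hat u)$ diverges for suitable $u,\hat u$ with $u-\hat u\in{\rm coef}(X,(x_{b_n}))$, contradicting that $\theta'$ is a reduction. In short, hypothesis~(2) is used to pass from block-basis convergence back to original-basis convergence, not to control any product-topology modulus.
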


Using the theorems and their proof above, we can show $\mathbb{R}^\mathbb{N}/l_1$ and $\mathbb{R}^\mathbb{N}/c_0$ are minimal equivalence relations in the order of reducibility among all the equivalence relation of the form $E(X,(x_n))$. Of course, the minimality of $\mathbb{R}^\mathbb{N}/l_1$ can also be obtained by the property that $\mathbb{R}^\mathbb{N}/l_1$ is a minimal turbulent equivalence relation. Then a natural question, corresponding to a well-known theorem in Banach spaces, arises now: For all $E(X,(x_n))$, whether for some $p\geq 1$, $\mathbb{R}^\mathbb{N}/l_p \leq E(X,(x_n))$ or $\mathbb{R}^\mathbb{N}/c_0 \leq E(X,(x_n))$ always holds for one case. We answer this question in negative. In fact, just the equivalence relation generated by Tsirelson space $(T,(t_n))$ (the dual of the original Tsirelson space) witness a different situation.

Using the theorems above and the method addressed on turbulent ideals introduced by Farah(see \cite{F2} and \cite{F1}), we can prove following theorems.

\begin{proposition}
For any $p\geq 1$, Neither $\mathbb{R}^\mathbb{N}/l_p$ nor $\mathbb{R}^\mathbb{N}/c_0$ are Borel reducible to $E(T,(t_n))$.
\end{proposition}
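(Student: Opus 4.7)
The plan is to split the argument into cases according to the choice of $X$, and to invoke Theorem~2 where possible, falling back on Farah's turbulent ideals technique for the remaining case.

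For $X = l_p$ with $p > 1$ and for $X = c_0$, I would apply Theorem~2 with $(Y,\{y_n\}) = (T,\{t_n\})$. Condition~(2), the lower semi-homogeneity of $\{t_n\}$, I expect to follow from the $1$-subsymmetry and $1$-unconditionality of the unit vector basis of Tsirelson's space. For condition~(1), since the standard bases of $l_p$ and $c_0$ are symmetric, every subsequence is $1$-equivalent to the full basis, so it suffices to rule out that $\{e_n\}$ itself dominates $\{t_n\}$. The obstruction comes from the admissibility inequality for Tsirelson's norm: taking the admissible set $E_k = \{k, k+1, \dots, 2k-1\}$ and splitting it into $k$ singletons gives
\[
\Bigl\|\sum_{n \in E_k} t_n\Bigr\|_T \geq \frac{k}{2},
\]
while $\bigl\|\sum_{n \in E_k} e_n\bigr\|_{l_p} = k^{1/p}$ and $\bigl\|\sum_{n \in E_k} e_n\bigr\|_{c_0} = 1$. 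For $p > 1$ the ratio $k^{1-1/p}/2 \to \infty$, and for $c_0$ the ratio $k/2 \to \infty$, in each case contradicting any hypothetical domination constant $C$. Theorem~2 then yields the desired non-reducibility.

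The case $p = 1$ requires a separate argument, because the triangle inequality gives $\|\sum a_n t_n\|_T \leq \sum |a_n|$, so the $l_1$ basis automatically dominates $\{t_n\}$ and condition~(1) of Theorem~2 fails. Here I would apply Farah's turbulent ideals technique, viewing $\mathbb{R}^\mathbb{N}/l_1$ as generated by the Polishable, turbulent, $F_\sigma$ P-ideal of summable sequences. Given a hypothetical Borel reduction $f \colon \mathbb{R}^\mathbb{N} \to \mathbb{R}^\mathbb{N}$, I would select a sequence of $l_1$-small perturbations along basis directions whose $f$-images remain $T$-equivalent; from this one extracts a block sequence $(v_k)$ in $T$ yielding a map from the $l_1$ basis into $T$ whose image is $T$-bounded but whose coefficients must exhibit genuine $l_1$-type behavior, contradicting Tsirelson's theorem that $T$ contains no isomorphic copy of $l_1$.

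The main obstacle will be the $p = 1$ case: cleanly marshalling Farah's turbulence machinery so that a hypothetical Borel reduction concretely produces an $l_1$-sequence inside $T$ and thereby a contradiction with Tsirelson's property. The $p > 1$ and $c_0$ cases, by contrast, reduce once Theorem~2 is in hand to the admissibility estimate above together with the routine verification of lower semi-homogeneity of $\{t_n\}$.
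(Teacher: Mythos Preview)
Your overall architecture matches the paper: apply Theorem~1.2 for $p>1$ and $c_0$, and use Farah's ideal machinery for $p=1$. But there are two genuine problems.

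First, your justification of condition~(2) is wrong: the unit vector basis $\{t_n\}$ of $T$ is \emph{not} subsymmetric. The admissibility constraint $k\leq E_1<\cdots<E_k$ makes the norm highly position-dependent; indeed, no subsequence of $\{t_n\}$ is equivalent to $\{t_n\}$ itself in general, and the absence of a subsymmetric basic sequence is one of the defining features of Tsirelson's space. Lower semi-homogeneity of $\{t_n\}$ is a separate, non-trivial fact due to Casazza--Johnson--Tzafriri (the paper cites it as Proposition~3.13), and you should invoke that result rather than the false subsymmetry claim. Your non-domination argument via the admissible family $E_k=\{k,\dots,2k-1\}$ is fine and is equivalent (via the closed graph theorem) to the paper's choice of the explicit witness $(1/(n+1))$.

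Second, your $p=1$ sketch does not describe Farah's argument and, as written, would not close. A Borel reduction $\mathbb{R}^{\mathbb{N}}/l_1\to E(T,(t_n))$ does not obviously produce an isomorphic copy of $l_1$ inside $T$; the reduction need not be linear or bounded, so ``$l_1$-type behavior of coefficients'' does not translate into an embedding. The paper's route is: from a hypothetical reduction, Lemma~5.1 manufactures functions $f,g\to 0$ and a normalized block basis $\{s_j\}$ of $\{t_n\}$ with $I^{l_1}_{(e_n),f}=I^{T}_{(s_n),g}$; the Casazza--Johnson--Tzafriri block-basis lemma (Proposition~5.2) then replaces $\{s_j\}$ by a subsequence $\{t_{k_j}\}$, so the left-hand side, which is the summable ideal $\mathscr{I}_{1/n}$, equals a Tsirelson ideal $\mathscr{T}_{k,g}$; finally one invokes Farah's theorem (Proposition~5.3) that no Tsirelson ideal equals $\mathscr{I}_{1/n}$. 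The contradiction lives at the level of analytic $P$-ideals, not at the level of Banach-space embeddings, and the key extra ingredient you are missing is the block-basis-to-subsequence equivalence in $T$ that converts the unknown ideal $I^{T}_{(s_n),g}$ into an ideal of the specific form to which Farah's theorem applies.
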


We say any two equivalence relations $E$ and $F$ in a class $\mathcal{C}$ are compatible if there is an equivalence relation $R\in \mathcal{C}$ such that both $R\leq_B E$ and $R\leq_B F$ hold. We say subclass $\mathcal{B}$ of a class $\mathcal{C}$ of equivalence relations is a basis of $\mathcal{C}$ if for any $E\in \mathcal{C}$, there is a $F \in \mathcal{B}$ such that $F\leq_B E$. Using these terminologies, we can prove the following theorem and corollary .

Firstly, Corresponding to another theorem in Banach space theory that any $\alpha$-Tsirelson space and $\beta$-Tsirelson space are totally incomparable if $\alpha \neq \beta$, we have:
\begin{theorem}
$E(T_{\alpha},(t_n^{\alpha}))$ and $E(T_{\beta},(t_n^{\beta}))$ are incompatible among all Schauder equivalence relations.
\end{theorem}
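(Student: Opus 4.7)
The plan is to suppose for contradiction that some Schauder equivalence relation $R = E(Z,(z_n))$ satisfies both $R \leq_B E(T_\alpha,(t_n^\alpha))$ and $R \leq_B E(T_\beta,(t_n^\beta))$, and then to derive a Banach-space statement contradicting the assumed total incomparability of $T_\alpha$ and $T_\beta$.

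First I would apply the contrapositive of Theorem~1.2 to each reducibility. Since each Tsirelson basis $\{t_n^\alpha\}$ and $\{t_n^\beta\}$ is $1$-unconditional and subsymmetric, which should readily imply lower semi-homogeneity, this yields subsequences $\{z_{b_n}\}$ and $\{z_{c_n}\}$ of $\{z_n\}$ that respectively dominate $\{t_n^\alpha\}$ and $\{t_n^\beta\}$. A diagonal extraction produces a single subsequence $\{z_{k_n}\}$ dominating subsequences of both Tsirelson bases; by subsymmetry each such subsequence is equivalent to the full basis, so $\{z_{k_n}\}$ dominates both $\{t_n^\alpha\}$ and $\{t_n^\beta\}$. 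This in turn furnishes bounded linear surjections $\Phi_\alpha \colon [z_{k_n}] \twoheadrightarrow T_\alpha$ and $\Phi_\beta \colon [z_{k_n}] \twoheadrightarrow T_\beta$ determined by $z_{k_n} \mapsto t_n^\alpha$ and $z_{k_n} \mapsto t_n^\beta$.

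The crux is to upgrade these simultaneous quotient maps to a common isomorphic subspace embedding. Inside $[z_{k_n}]$ I would select a normalized block basic sequence $\{v_n\}$ whose images under both $\Phi_\alpha$ and $\Phi_\beta$ remain bounded away from zero; after normalization $\{\Phi_\alpha(v_n)\}$ and $\{\Phi_\beta(v_n)\}$ are block basic sequences of $\{t_n^\alpha\}$ and $\{t_n^\beta\}$ respectively. Invoking the classical block-minimality property of Tsirelson spaces (every normalized block basic sequence in $T_\gamma$ has a subsequence equivalent to a subsequence of $\{t_n^\gamma\}$, hence to $\{t_n^\gamma\}$ itself by subsymmetry), these images are equivalent to the respective unit vector bases. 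By further Ramsey-type selection one then arranges that $\Phi_\alpha$ and $\Phi_\beta$ restricted to $[v_n]$ are actual linear isomorphisms onto their images, making $[v_n]$ simultaneously isomorphic to a subspace of $T_\alpha$ and to a subspace of $T_\beta$.

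The main obstacle is exactly this last step: arranging that the restrictions of $\Phi_\alpha$ and $\Phi_\beta$ to $[v_n]$ stay far enough from their kernels to be isomorphisms and not merely bounded surjections. Once this is achieved, however, $[v_n]$ provides a common infinite-dimensional subspace (up to isomorphism) of $T_\alpha$ and $T_\beta$, contradicting total incomparability and completing the proof.
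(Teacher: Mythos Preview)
Your argument rests on a false premise: the Tsirelson bases $\{t_n^\alpha\}$ are \emph{not} subsymmetric. Indeed, no basic sequence in $T_\alpha$ can be subsymmetric, since $T_\alpha$ contains no copy of $c_0$ or $\ell_p$; more concretely, the admissibility condition $k\le E_1$ in the norm forces $\|\sum_{i=n}^{2n-1} t_i^\alpha\|$ to grow linearly in $n$ while $\|\sum_{i=1}^{n} t_i^\alpha\|$ does not. Consequently your diagonal extraction step collapses: passing to a common subsequence $\{z_{k_n}\}$ only gives domination of some \emph{subsequences} $\{t^\alpha_{m_n}\}$ and $\{t^\beta_{r_n}\}$, and these need not be equivalent to the full bases. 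What survives is merely a pair of bounded linear maps into $[t^\alpha_{m_n}]$ and $[t^\beta_{r_n}]$, and you already identify the remaining obstacle correctly: there is no general mechanism to upgrade two simultaneous quotient maps to a common isomorphic embedding. So the route via total incomparability of $T_\alpha$ and $T_\beta$ as Banach spaces does not close.

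The paper proceeds along an entirely different line. Rather than aiming for a Banach-space contradiction, it passes to \emph{ideals}. From a hypothetical common lower bound $E(X,(x_n))$ one first observes $X$ cannot contain $c_0$, then uses the Dougherty--Hjorth modularization (Claims~1--3 in the proof of Theorem~1.2) twice in succession to produce a single subsequence $\{x_{b_{d_n}}\}$ and a function $f$ with $f(n)\to 0$ such that the ideal $I^X_{(x_{b_{d_n}}),f}$ coincides with a Tsirelson ideal $\mathscr{T}_{k_1,g_1,\beta}$ on one side and with a Tsirelson ideal $\mathscr{T}_{k_2,g_2,\alpha}$ on the other (Proposition~5.2 is used to replace block bases by subsequences of $\{t_n^\gamma\}$). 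This yields $\mathscr{T}_{k_1,g_1,\beta}=\mathscr{T}_{k_2,g_2,\alpha}$ with both ideals turbulent, contradicting Farah's theorem that turbulent Tsirelson ideals with distinct parameters $\alpha\neq\beta$ are never equal. The key device you are missing is this translation of the reducibility into an equality of analytic $P$-ideals, where Farah's rigidity result does the work that total incomparability cannot.
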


Similar to the result of Farah(\cite{F2}), we have:
\begin{corollary}
Every basis of the class of Schauder equivalence relations must be of cardinal $2^{\omega}$
\end{corollary}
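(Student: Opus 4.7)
The plan is to leverage Theorem 1.4 together with the fact that there are continuum many $\alpha$-Tsirelson spaces. Let $\mathcal{C}$ denote the class of Schauder equivalence relations and let $\mathcal{B} \subseteq \mathcal{C}$ be an arbitrary basis; I will argue $|\mathcal{B}| = 2^{\omega}$ by bounding $|\mathcal{B}|$ both from below and from above.

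For the lower bound, I would proceed by a pigeonhole argument. For each admissible parameter $\alpha$ the relation $E(T_{\alpha},(t_n^{\alpha}))$ lies in $\mathcal{C}$, so by the defining property of a basis there exists $F_{\alpha} \in \mathcal{B}$ with $F_{\alpha} \leq_B E(T_{\alpha},(t_n^{\alpha}))$. The key step is to observe that the assignment $\alpha \mapsto F_{\alpha}$ must be injective: if $F_{\alpha} = F_{\beta}$ for some $\alpha \neq \beta$, then this common member of $\mathcal{B} \subseteq \mathcal{C}$ witnesses simultaneously $F_{\alpha} \leq_B E(T_{\alpha},(t_n^{\alpha}))$ and $F_{\alpha} \leq_B E(T_{\beta},(t_n^{\beta}))$, i.e.\ $E(T_{\alpha},(t_n^{\alpha}))$ and $E(T_{\beta},(t_n^{\beta}))$ are compatible in $\mathcal{C}$, directly contradicting Theorem 1.4. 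Thus $|\mathcal{B}|$ is at least the cardinality of the indexing family of $\alpha$-Tsirelson spaces.

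For the matching upper bound, I would appeal to the standard fact that Borel equivalence relations on Polish spaces are completely specified by Borel codes in $2^{\omega}$, so there are at most $2^{\omega}$ of them; in particular $|\mathcal{B}| \leq 2^{\omega}$. Combining the two bounds yields $|\mathcal{B}| = 2^{\omega}$ and finishes the proof.

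The only non-mechanical step, and hence what I expect to be the main obstacle, is to guarantee that the indexing set for $\{E(T_{\alpha},(t_n^{\alpha}))\}$ genuinely has cardinality $2^{\omega}$. Under the standard Schreier-family construction $\alpha$ ranges over countable ordinals, which yields only $\aleph_1$ many spaces; so either one invokes CH, or one refines the Tsirelson norm with a real-valued parameter, or one combines Theorem 1.4 with an almost-disjoint-family argument on subsequences of $\{t_n^{\alpha}\}$ to manufacture continuum many pairwise incompatible Schauder equivalence relations. Once this indexing issue is handled, the injective pigeonhole above together with the coding upper bound gives the corollary.
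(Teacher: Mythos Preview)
Your pigeonhole argument is correct and is essentially the same as the paper's (the paper phrases it contrapositively: each member of a putative basis can reduce to at most one $E(T_{\alpha},(t_n^{\alpha}))$ by Theorem~1.4, so fewer than $2^{\omega}$ members miss some $\alpha$). The upper bound via counting Borel codes is also the same.

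Your stated ``main obstacle,'' however, is based on a misreading of the paper's definition of $T_{\alpha}$. Here $\alpha$ is not a countable ordinal attached to a Schreier-family construction; it is simply the real multiplicative constant $\alpha\in(0,1)$ appearing in the Figiel--Johnson norm (see Definition~2.4, where $T=T_{1/2}$). Thus the index set is the open interval $(0,1)$ and automatically has cardinality $2^{\omega}$. No appeal to CH, no real-valued refinement, and no almost-disjoint-family argument is needed: once you delete your final paragraph, the proof is complete and matches the paper's.
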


\section{Preliminaries}
In this section, we recall some basic notions concerning descriptive set theory, Banach spaces and Ideals. For the standard terminology in descriptive set theory we refer to \cite{G}, \cite{Ka} and  \cite{K}. We call a topological space \emph{Polish} if it is separable and completely metrizable. A \emph{Polish group} is a topological group with a compatible Polish topology. If $X$ is a Polish space and $G$ is a Polish group with an action $\cdot$ to $X$, then the orbit equivalence relation $E_G^X$ is defined by
$$xE_G^X y\iff \exists g\in G (g\cdot x=y).$$

Let $X$, $Y$ are Polish spaces and $E$, $F$ are equivalence relations on $X$ and $Y$, respectively. A \emph{Borel reduction} from $E$ to $F$ is a Borel function $\theta \colon X\rightarrow Y$ witnesses that
$$xEy \iff \theta (x)F\theta (y)$$
for all $x,y\in X$. In this case, we say that $E$ is \emph{Borel reducible} to $F$, denoted by $E \leq_B F$, if there is a Borel reduction from $E$ to $F$. We say $E$ and $F$ are \emph{Borel equivalent}, denoted by $E\thicksim_B F$, If $E \leq_B F$ and $F \leq_B E$. We call $E$ \emph{strictly Borel reducible} to $F$, denoted by $E<_B F$, if $E \leq_B F$ but not $F \leq_B E$. If $E$ is Borel reducible to a Borel countable equivalence relation, we say $E$ \emph{essentially countable}.

Hjorth once studied a dynamical property of group actions called \emph{turbulence} and proved that any equivalence relations generated by turbulent actions is not essentially countable. Related to turbulent equivalence relations, Hjorth(see \cite{H}) also proved the 5th dichotomy theorem as follows.

\begin{theorem}[\textup{Hjorth} \cite{H}]
Let a Borel equivalence relation $E \leq_B \mathbb{R}^\mathbb{N}/l_1$. Then exactly one of the following holds:

{\rm (1)}\quad $E$ is essentially countable;

{\rm (2)}\quad $\mathbb{R}^\mathbb{N}/l_1 \leq_B E$.
\end{theorem}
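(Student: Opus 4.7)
The plan is to reproduce Hjorth's turbulence-based argument. The essential input is that $\mathbb{R}^\mathbb{N}/l_1$ is the orbit equivalence relation of the translation action of $l_1$ on $\mathbb{R}^\mathbb{N}$ and that this action is generically turbulent: every orbit is dense, and no orbit is locally closed. The dichotomy is an instance of the general principle that for such orbit equivalence relations, failure of essential countability forces full reducibility back from the canonical generator.

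First I would arrange a convenient setting. Let $\theta\colon X\to \mathbb{R}^\mathbb{N}$ be a Borel reduction witnessing $E \leq_B \mathbb{R}^\mathbb{N}/l_1$; after refining the Polish topology on $X$ without changing its Borel structure, assume $\theta$ is continuous. Pulling the $l_1$-action back along $\theta$, for $x\in X$ and $v\in l_1$ call $y\in X$ a \emph{$v$-perturbation of $x$} when $\theta(y)=\theta(x)+v$; by the reduction property every $v$-perturbation of $x$ lies in the same $E$-class as $x$. The dichotomy then splits according to the structure of the set of available perturbations. If a Lusin--Novikov style uniformization yields a Borel selection of countably many canonical representatives in each $E$-class, then $E$ reduces to a countable Borel equivalence relation and alternative~(1) holds. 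Otherwise, there is an $x$ admitting a ``perfect'' family of small $v$-perturbations living comeagrely in some open subset of $l_1$.

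In that perfect case I would construct a continuous map $\varphi\colon \mathbb{R}^\mathbb{N}\to X$ reducing $\mathbb{R}^\mathbb{N}/l_1$ to $E$ by a fusion argument along a tree of small $l_1$-perturbations. At each level, generic turbulence of the $l_1$-action guarantees that when the formal $l_1$-sum along a branch converges the branch limit lies in the correct $E$-class, while generic independence between branches forks the $E$-classes whenever the $l_1$-difference diverges. The main obstacle is precisely this fusion: the perturbations at each level must be small enough in $l_1$-norm to secure convergence along every descendant branch, yet generic enough to separate $E$-classes across branches whose difference has infinite $l_1$-norm. This is exactly where turbulence enters via its short-path perturbation lemma, and where a Gandy--Harrington style forcing is needed to make the level-by-level selections both Borel and uniformly controlled, so that the resulting $\varphi$ is a genuine Borel (even continuous) reduction witnessing $\mathbb{R}^\mathbb{N}/l_1 \leq_B E$.
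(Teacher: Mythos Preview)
The paper does not prove this theorem at all. It appears in the Preliminaries section as a cited background result attributed to Hjorth \cite{H}, and is quoted solely to motivate the remark that $\mathbb{R}^\mathbb{N}/l_1$ is minimal among turbulent equivalence relations. No argument for it is given or sketched anywhere in the paper, so there is no ``paper's own proof'' to compare your proposal against.

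As for the proposal itself: the overall shape (turbulence of the $l_1$-action, a dichotomy between a countable-selector case and a fusion construction along a tree of small perturbations, with Gandy--Harrington effective arguments to control the selections) is indeed the flavor of Hjorth's original proof. But what you have written is a narrative outline rather than a proof. The two places where real work happens --- the claim that failure of essential countability yields a point with a comeager family of small perturbations, and the fusion step that simultaneously guarantees convergence along branches and separation across branches --- are each asserted without any mechanism. In particular, your ``Lusin--Novikov style uniformization'' remark is not how the essentially-countable case is actually isolated in Hjorth's argument; the split is driven by the effective descriptive-set-theoretic analysis of the saturation of the reduction, not by a direct countable-section criterion. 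If you intend this as a genuine proof you would need to supply those two arguments in detail; as written it is a plausible plan but not a proof.
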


As $\mathbb{R}^\mathbb{N}/l_1$ is turbulent, we thus know that it is a minimal equivalence relation generated by a turbulent action.

Given a Banach space $X$, \emph{a Schauder basis} $\{x_n\}$ of $X$ means that any $x\in X$ can be expanded by the form $x= \sum\limits_{n=0}^{\infty}a_nx_n$ for a unique $\{a_n\}$ in $\mathbb{R}^\mathbb{N}$. A sequence $\{x_n\}$ is a basis of its closed linear span $[x_n]^{\infty}_{n=0}$ is called a \emph{basic sequence}. we say $\{x_n\}$ unconditional if $x= \sum\limits_{n=0}^{\infty}a_nx_n$ converges unconditionally. Here, a series $\sum\limits_{n=0}^{\infty}x_n$ converges unconditionally means that the series $\sum\limits_{n=0}^{\infty}x_{\pi(n)}$ converges for every permutation $\pi$ of the integers. A basic sequence $\{x_n\}$ is unconditional if and only if the convergence of $\sum\limits_{n=0}^{\infty}a_nx_n$ implies the convergence of $\sum\limits_{n=0}^{\infty}b_nx_n$ whenever $|b_n|\leq |a_n|$, for all $n$. For more details, please see see Proposition 1.c.6 in \cite{L-T}

Now, we mention the definition of the Schauder equivalence relations. This definition is due to Ding.

\begin{definition}[\textup{Ding} \cite{D3}]
For a basic sequence $\{x_n\}$ in Banach space $X$, we denote ${\rm coef}(X,(x_n))$ to be the set of all $a=(a_n)\in \mathbb{R}^\mathbb{N}$ such that $\sum a_nx_n$ converges. Define the Schauder equivalence relation $E(X,(x_n))$ by, for any $x,y\in \mathbb{R}^\mathbb{N}$,
$$(x,y)\in E(X,(x_n))\iff x-y\in {\rm coef}(X,(x_n)).$$
\end{definition}

It is worth noting that ${\rm coef}(X,(x_n))$  is a Borel subgroup of $\mathbb{R}^\mathbb{N}$. As the projection map on $X$ to each coordinates is continuous, we can easily check that ${\rm coef}(X,(x_n))$ is a Polishable subgroup of $\mathbb{R}^\mathbb{N}$.  $E(X,(x_n))$ is an orbit equivalence relation generated by the natural turbulent action of ${\rm coef}(X,(x_n))$ on $\mathbb{R}^\mathbb{N}$ and it is easy to check that such equivalence relations is Borel. We can easily see that $E(l_p, (e_n))$ is just the well-known equivalence relation $\mathbb{R}^\mathbb{N}/l_p$ and similarly, $E(c_0, (e_n))$ is $\mathbb{R}^\mathbb{N}/c_0$.

The followings is the definition of the block bases of a basic sequence.

\begin{definition}(\cite[Definition 1.a.10]{L-T})
\emph{Let $\{x_n\}$ be a basic sequence in a Banach space $X$. A sequence of non-zero vectors $\{u_j\}$ in $X$ of the form $u_j=\sum\limits^{p_{j+1}}_{p_j+1}a_nx_n$, with $\{a_n\}$ scalars and $p_1<p_2<\cdots$ an increasing sequence of integers, is called a \emph{block basic sequence} or briefly a \emph{block basis} of $\{x_n\}$.}
\end{definition}

 For two basic sequence $\{x_n\}$ and $\{y_n\}$ in $X$ and $Y$, if ${\rm coef}(X,(x_n))\subset {\rm coef}(Y,(y_n))$, we say that $\{x_n\}$ \emph{dominates} $\{y_n\}$, denoted by $\{x_n\}\gg\{y_n\}$ (see \cite{C-S}). If ${\rm coef}(X,(x_n))={\rm coef}(Y,(y_n))$, we say $\{x_n\}$ and $\{y_n\}$ are \emph{equivalent}. We call $\{x_n\}$ \emph{subsequence equivalent} if for any subsequence $\{x_{k_n}\}$ of $\{x_n\}$, ${\rm coef}(X,(x_n))={\rm coef}(X,(x_{k_n}))$. An unconditional subsequence equivalent basic sequence is called \emph{subsymmetric}.(see Definition 3.a.2 in \cite{L-T}). Furthermore, we call $\{x_n\}$ \emph{symmetric} if for any permutation $\pi$ of $\mathbb{N}$, ${\rm coef}(X,(x_n))={\rm coef}(X,(x_{\pi(n)}))$.

Another property we mention is called \emph{lower semi-homogeneous}. It means that any normalized block bases of $\{x_n\}$ dominates $\{x_n\}$, where $\{x_n\}$ is a normalized basis in $X$. To my knowledge, This property was firstly studied by Casazza and Bor-Luh Lin (see \cite{C-L}). Undoubtedly, $c_0$ and $l_p$, $1 \leq p< \infty$ are lower semi-homogeneous (see Theorem 2.a.9 in \cite{L-T}) but it is not true that only they have the property. In \cite{C-L}, there is such an example concerning a Lorentz sequence space. It is easy to check that, or by Proposition 1 in \cite{C-L}, every lower semi-homogeneous basis is an unconditional basis.

For a Tsirelson's space $T$ in this paper, we mean the dual space of the oringinal space constructed by Tsirelson. We would like to provide its definition here. In fact, we would like to move further to provide the definition of a general version of Tsirelson space, $T_{\alpha}$, here. Based on this definition, Tsirelson space, $T$ is just $T_{\alpha}$, when $\alpha$ is $1/2$. For any finite non-void subset $E$, $F$ of $\omega$, we denote $E<F$ for $\max(E)<\min(F)$, with $n<E$, instead of $\{n\}<E$, and with analogous meanings for $E\leq F$. For the space $c_{00}$, we mean the sequence space of all sequences of scalars which are eventually zero.

\begin{definition}(\cite[Example 2.e.1]{L-T})
For any $\alpha\in (0,1)$, define a sequence of norms $\|\cdot\|_m$ upon $c_{00}$ as follows:
fixing $x=\sum\limits_n a_nx_n\in c_{00}$, let $\|x\|_0= \max\limits_n|a_n|$. Then by induction, for $m\geq 0$
$$\|x\|_{m+1}=\max \{\|x\|_m,\alpha\cdot \max [\sum\limits_{j=1}^k\|E_jx\|_m]\},$$
where ``inner'' max is taken over all choices of finite subsets $\{E_j\}_{j=1}^k$ of $N$ as $k$ varies and such that  $k\leq E_1 <E_2<\ldots <E_k$.

Then $\|x\|=\lim \|x\|_m$ is a norm on $c_{00}$. The general Tsirelson space $T_{\alpha}$ is the completion of $c_{00}$ with the norm $\|\cdot\|$.
\end{definition}

When $\alpha$ is $1/2$, $T_{1/2}$ is the Tsirelson space $T$. This definition, as far as I know, is firstly introduced by Figiel and Johnson \cite{F-J} and further being studied by Cassazza, Johnson and Tzafriri and Shura(\cite{C-J-T}, \cite{C-S}). It is  well-known that $T_{\alpha}$ are spaces which contain no subspace isomorphic to any $l_p$ for $p\geq 1$ and $c_0$. They have similar properties in some way but can be totally different from each other. That is, for different $\alpha, \beta$, $T_{\alpha}$ and $T_{\beta}$ are \emph{totally incomparable} Banach spaces, i.e. they do not have same infinite-dimensional subspaces in the isomorphic view. For more details, see Definition 2.c.1 in \cite{L-T} and theorem X.a.3 in \cite{C-S}.

More ``Tsirelson-like'' spaces $T_h$ and $T_{\alpha,h}$ can be defined in the similar manner. If we define $\|x\|_{m+1}= \max \{\|x\|_m,\alpha \cdot \max [\sum\limits_{j=1}^{h(k)}\|E_jx\|_m]\}$ for some strictly increasing function $h$ from $\mathbb{N}$ to $\mathbb{N}$, we will obtain the $T_{\alpha,h}$. Similarly, when $\alpha$ is $1/2$, we obtain the $T_h$. It is worth noting that the basis $\{t_n^h\}$ in $T_h$ is equivalent to the basic sequence $\{t_{h(n)}\}$ of the basis $\{t_n\}$ in $T$. For more details, Please see \cite{C-J-T} and \cite{C-S}.

There are more sequence spaces which are generalization of $l_p$. We firstly mention Orlicz sequence spaces, which were firstly introduced by Orlicz. For more detail, Please see \cite{L-T}

\begin{definition}(\cite[Definition 4.a.1]{L-T})
An \emph{Orlicz Function} $M$ is a continuous non-decreasing and convex function defined for $t\geq 0$ such that $M(0)=0$ and $\lim\limits_{t\rightarrow \infty}M(t)= \infty$.
\end{definition}

For any Orlicz function $M$ we can define Banach space $l_M$ contains all sequences of scalar $x\in \mathbb{R}^\mathbb{N}$ such that $\sum\limits_{n=0}^{\infty}M(x(n)/\rho)<\infty$ for some $\rho$. On $l_M$, we can define a compatible norm as follows:
$$\|x\|=\inf\{\rho\ \colon \sum\limits_{n=0}^{\infty}M(x(n)/\rho)<\infty\}$$
The subspace $h_M$ of $l_M$, which contains all sequences $x\in l_M$ such that $\sum\limits_{n=0}^{\infty}M(x(n)/\rho)<\infty$ for all $\rho$, is of particular interest. It can be checked that $h_M$ is a closed subspace of $l_M$ and unit vectors $\{e_n\}$ form a symmetric basis of $h_M$. (see Proposition 4.a.2 in \cite{L-T}). The following property is called $\Delta^{'}-$condition.

\begin{definition}
An Orlicz function $M$ is said to satisfy the $\Delta^{'}-$\emph{condition} at zero if there is real numbers $c$ and $x_0$ such that for all $x,y\in [0,x_0]$, $M(xy)\geq cM(x)M(y)$.
\end{definition}

Now, we mention Lorentz sequence spaces $d(w,p)$, which were introduced firstly by Lorentz as function spaces. For more details, please also see \cite{L-T}.

\begin{definition}(\cite[Definition 4.e.1]{L-T})
Let $1\leq p< \infty$, and let $w=\{w_n\}$ be a non-decreasing sequence of positive numbers such that $w_0=1$, $\lim\limits_{n\rightarrow \infty} w_n=0$ and $\sum\limits_{n=0}^{\infty}w_n=\infty$. The Banach space of all sequence $x\in \mathbb{R}^\mathbb{N}$ for which
$$\|x\|=\sup\limits_{\pi}(\sum\limits_{n=1}^{\infty}\|x(\pi(n))\|^pw_n)^{1/p}<\infty,$$
where $\pi$ ranges over all the permutations of the integers, is denoted by $d(w,p)$ and it is called a \emph{Lorentz sequence space}
\end{definition}

For an ideal $\mathscr{I}$, we mean a set $\mathscr{I}\subset P(\omega)$ such that for any $A,B\in \mathscr{I}$, $A\cup B\in \mathscr{I}$ and if $C\subset A\in \mathscr{I}$, then $C\in \mathscr{I}$.  Such an ideal can be regarded as a subset of Cantor space $2^\mathbb{N}$ with the usual product topology. A Borel ideal thus means that the ideal is a Borel subset of $2^\mathbb{N}$. In this way, any Borel ideal is a Borel subgroups of $2^\mathbb{N}$ under the operation $\Delta$, where $x\Delta y=(x-y)\bigcup (y-x)$.  Then, the natural action, $\Delta$, of a Borel ideal $\mathscr{I}$ on $2^\mathbb{N}$ can generate a equivalence relation.

If such an action is turbulent, we say $\mathscr{I}$ is turbulent. In addition, we say an ideal $\mathscr{I}$ Polishable if it is a Polishable subgroup in $2^\mathbb{N}$.

A typical way to define an ideal is to use submeasures. A \emph{submeasure} on a set $A$ is any map $\phi \colon P(A)\rightarrow [0.\infty]$, satisfying $\phi(\emptyset)=0$, $\phi(\{a\})<\infty$ for all $a$, and $\phi(x)\leq \phi(x\bigcup y)\leq \phi(x)+\phi(y)$. A submeasure $\phi$ on $\mathbb{N}$ is \emph{lower-semicontinuous}, or LSC for brevity, if we have $\phi(x)=\sup_n\phi(x\bigcap[0,n))$ for all $x\in P(\mathbb{N})$. For any submeasure $\phi$, Define the \emph{tail submeasure} $\phi_{\infty}(x)=\inf_n(\phi(x\bigcap [n,\infty)))$. Now, following ideals will be considered.
$$\verb"Fin"_{\phi}=\{x\in P(\mathbb{N})\colon\ \phi(x)< \infty \}$$
$$\verb"Null"_{\phi}=\{x\in P(\mathbb{N})\colon\ \phi(x)=0 \}$$
$$\verb"Exh"_{\phi}=\{x\in P(\mathbb{N})\colon\ \phi_{\infty}(x)=0 \}$$

Using these terminology, a characterization theorem which is due to Solecki, can be arrived as follows:

\begin{theorem}(\cite[Theorem 3.5.1]{Ka})
Suppose that $\mathscr{I}\subset P(\omega)$ is an ideal, then following conditions are equivalent:

{\rm (1)}\quad $\mathscr{I}$ has the form $\verb"Exh"_{\phi}$, where $\phi$ is a LSC submeasure on $\mathbb{N}$.

{\rm (2)}\quad $\mathscr{I}$ is Polishable.

{\rm (3)}\quad $\mathscr{I}$ is an analytic P-ideal.

Furthermore, $\mathscr{I}$ is an $F_{\sigma}$ P-ideal iff $\mathscr{I}=\verb"Fin"_{\phi}=\verb"Exh"_{\phi}$, for some LSC submeasure.

\end{theorem}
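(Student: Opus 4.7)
The plan is to prove the three-way equivalence by the chain $(1) \Rightarrow (3) \Rightarrow (2) \Rightarrow (1)$, following Solecki's original route, and then address the ``Furthermore'' clause separately. The direction $(1) \Rightarrow (3)$ is the mildest. Given $\mathscr{I} = \verb"Exh"_{\phi}$, analyticity follows directly from the LSC hypothesis: the identity $\mathscr{I} = \bigcap_m \bigcup_n \{A \colon \phi(A \cap [n, \infty)) \leq 1/m\}$ exhibits $\mathscr{I}$ as an $F_{\sigma\delta}$ set, since each $\phi(A \cap [n, k))$ depends continuously on finitely many coordinates of $A$ and $\phi(A \cap [n, \infty))$ is the supremum of these over $k$. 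For the P-ideal property, given $\{A_n\} \subseteq \mathscr{I}$, I pick integers $k_0 < k_1 < \cdots$ with $\phi(A_n \cap [k_n, \infty)) < 2^{-n}$, set $B = \bigcup_n (A_n \cap [k_n, \infty))$, and apply countable subadditivity (automatic from finite subadditivity plus LSC) to obtain $\phi(B \cap [k_m, \infty)) \leq \sum_{n \geq m} 2^{-n}$, so $B \in \verb"Exh"_{\phi}$ while $A_n \subseteq^{*} B$ for every $n$.

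For $(3) \Rightarrow (2)$, the P-ideal property supplies a natural notion of Cauchy convergence on $\mathscr{I}$ that refines the subspace topology from $2^{\mathbb{N}}$ and admits a compatible complete metric, making $\mathscr{I}$ a Polish subgroup. The hard direction, $(2) \Rightarrow (1)$, is where the bulk of the work lies. I would fix a complete translation-invariant compatible metric $d$ on $(\mathscr{I}, \tau)$ via Birkhoff--Kakutani, and then define
$$\phi(A) = \sup_n d(\emptyset, A \cap [0, n))$$
for every $A \subseteq \mathbb{N}$; note $A \cap [0, n)$ is finite and hence lies in $\mathscr{I}$. Lower semicontinuity of $\phi$ is immediate from the supremum form. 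The required verifications are (i) finite subadditivity of $\phi$, from translation invariance of $d$ together with $A \cap [0, n) = (A \setminus C) \cap [0, n) \;\Delta\; (C \cap A \cap [0, n))$-type decompositions; (ii) $\verb"Exh"_{\phi} \subseteq \mathscr{I}$, because $\phi_{\infty}(A) = 0$ forces the initial segments $A \cap [0, n)$ to be $d$-Cauchy, hence $\tau$-convergent to some $B \in \mathscr{I}$ which must equal $A$ since $\tau$ refines pointwise convergence on coordinates; and (iii) $\mathscr{I} \subseteq \verb"Exh"_{\phi}$, from continuity of $\Delta$ at $\emptyset$ in $\tau$, which forces the tail contributions $d(\emptyset, A \cap [n, \infty))$ to vanish as $n \to \infty$.

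The ``Furthermore'' clause follows by observing that $\mathscr{I}$ is $F_\sigma$ iff $\mathscr{I} = \bigcup_n \{A \colon \phi(A) \leq n\}$ for the $\phi$ constructed above, which is equivalent to $\verb"Fin"_{\phi} = \verb"Exh"_{\phi}$; conversely, any such equality presents $\mathscr{I}$ as an $F_\sigma$ union of closed level sets. The main obstacle is the $(2) \Rightarrow (1)$ construction, particularly the equality $\verb"Exh"_{\phi} = \mathscr{I}$: the $\subseteq$ direction needs the finer Polish topology to dominate pointwise convergence (so that the $d$-limit of the initial segments can be identified with $A$ itself), while $\supseteq$ demands tail estimates wrung from continuity of the group operation at $\emptyset$. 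These two compatibility checks between the ``external'' product topology and the ``internal'' Polish topology $\tau$ form the technical heart of Solecki's argument.
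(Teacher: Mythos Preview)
The paper does not prove this theorem at all: it is stated in the preliminaries section as a cited background result (Solecki's characterization, quoted from Kanovei's book), and is used as a black box in Section~5. There is therefore no ``paper's own proof'' to compare your proposal against.

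That said, a brief comment on your sketch. The implications $(1)\Rightarrow(3)$ and $(2)\Rightarrow(1)$ are handled reasonably, and your construction of $\phi$ from a compatible invariant metric is the right idea for the latter. The weak link is $(3)\Rightarrow(2)$: you write that ``the P-ideal property supplies a natural notion of Cauchy convergence on $\mathscr{I}$ that refines the subspace topology,'' but this is where essentially all the content of Solecki's theorem lives, and the sentence as written does not indicate how analyticity is used. The P-ideal property alone certainly does not produce a Polish group topology (there are non-Polishable P-ideals under mild hypotheses if one drops definability). Solecki's actual route is to prove $(3)\Rightarrow(1)$ directly by a combinatorial argument exploiting analyticity: one builds the submeasure $\phi$ from a suitable family of finite approximations, using that an analytic P-ideal cannot ``spread out'' too wildly. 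Once $(1)$ is established, $(2)$ follows easily since $\verb"Exh"_{\phi}$ carries the Polish group topology induced by the metric $d(A,B)=\phi_\infty(A\mathbin{\Delta} B)$. If you intend to keep the cycle $(1)\Rightarrow(3)\Rightarrow(2)\Rightarrow(1)$, the middle arrow needs a genuine argument rather than a one-line gesture.
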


\begin{remark}
We know that any Polishable ideal $\mathscr{I}$ has the form $\verb"Exh"_{\phi}$, where $\phi$ is a LSC submeasure, on $\mathbb{N}$ is turbulent if and only if $\phi(\{n\})\rightarrow 0$. \emph{See \cite{K1}}.
\end{remark}

A famous type of turbulent analytical P-ideals is the summable ideals. Here, we only mention $\mathscr{I}_{1/n}= \{A \colon \sum\limits_{n\in A} 1/n< \infty\}$. It is well-known that $E_{\mathscr{I}_{1/n}}\thicksim \mathbb{R}^\mathbb{N}/l_1$. For more details about ideals, we refer to \cite{Ka}. Given a Banach space $X$, and an unconditional basic sequence $\{x_n\}$ of $X$ such that $\sum x_n$ diveges. we can define an ideal as follows:
$$\mathscr{I}=\{A \colon \sum\limits_{n\in A} x_n\ \textrm{converges}\}.$$

In this manner, Farah defined  a kind of $\alpha-$Tsirelson ideals $ \mathscr{T}_{f,h,\alpha}$.(see \cite{F2} and \cite{F1}). Actually, by induction, He defined a LSC submeasure $\tau_{f,h,\alpha}$, which is similar to the definition of norm in Tsirelson space  to induce the ideals. In this paper, we do not need to deal with these submeasures. Thus, for more details about them, please see  \cite{F2} and \cite{F1}.
\section{Reducibility and non-reducibility}

In this section, we will mainly prove Theorem 1.1 and Theorem 1.2. We begin with the reducibility theorem. The following lemma is trivial but fundamental.

\begin{lemma}
Suppose that $\{x_n\}$ is a basic sequence in $X$ and $\{u_j=\sum\limits^{p_{j+1}}_{p_j+1}a_nx_n\}$, with $\{a_n\}$ scalars and $p_1<p_2<\cdots$ an increasing sequence of integers, is a block basis of $\{x_n\}$, Then $E(X,(u_n))\leq_B E(X,(x_n))$. In particular, for any subsequence $\{x_{k_n}\}$ of $\{x_n\}$, we have $E(X,(x_{k_n}))\leq_B E(X,(x_n))$.
\end{lemma}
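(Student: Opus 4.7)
The plan is to exhibit an explicit continuous group homomorphism $\theta \colon \mathbb{R}^{\mathbb{N}} \to \mathbb{R}^{\mathbb{N}}$ whose preimage of ${\rm coef}(X,(x_n))$ is exactly ${\rm coef}(X,(u_n))$. Since each $E(X,(u_n))$ and $E(X,(x_n))$ is defined from its coefficient subgroup by translation, such a $\theta$ automatically witnesses $E(X,(u_n)) \leq_B E(X,(x_n))$, because a homomorphism satisfies $\theta(x) - \theta(y) = \theta(x-y)$. The natural definition to try is
$$\theta(c)(n) = \begin{cases} c_j\, a_n, & \text{if } p_j < n \leq p_{j+1}, \\ 0, & \text{if } n \leq p_1, \end{cases}$$
which is $\mathbb{R}$-linear and has each output coordinate determined by a single input coordinate, so it is continuous (in particular Borel) and a group homomorphism.

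The content of the lemma is then the biconditional $c \in {\rm coef}(X,(u_n)) \iff \theta(c) \in {\rm coef}(X,(x_n))$, which rests on the identity
$$\sum_{j=1}^{k} c_j u_j \;=\; \sum_{n=1}^{p_{k+1}} \theta(c)(n)\, x_n \;=:\; T_{p_{k+1}}$$
obtained by substituting $u_j = \sum_{n=p_j+1}^{p_{j+1}} a_n x_n$. The ``$\Leftarrow$'' direction is immediate: convergence of $\sum_n \theta(c)(n) x_n$ passes to the subsequence $T_{p_{k+1}}$ of partial sums, which equals $\sum_j c_j u_j$.

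For ``$\Rightarrow$'', assume $\sum_j c_j u_j$ converges, let $K$ be the basis constant of $\{x_n\}$, and let $P_m$ denote the associated partial-sum projections. For $p_k < m \leq p_{k+1}$ one has $T_m - T_{p_k} = P_m(c_k u_k)$, so $\|T_m - T_{p_k}\| \leq K\|c_k u_k\|$; combined with $c_k u_k \to 0$ and the Cauchy-ness of the subsequence $(T_{p_{k+1}})_k$, this forces the full sequence $(T_m)_m$ to be Cauchy, so $\sum_n \theta(c)(n) x_n$ converges in $X$.

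The ``in particular'' statement follows by viewing a subsequence $\{x_{k_n}\}$ as the block basis with $p_1 = 0$, $p_{j+1} = k_j$, $a_{k_j} = 1$, and $a_n = 0$ otherwise, then applying the first part. I do not anticipate any serious obstacle here; the one nontrivial ingredient is the basis-constant inequality used to control partial sums strictly inside a single block, and the rest is bookkeeping.
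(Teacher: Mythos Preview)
Your proposal is correct and follows essentially the same approach as the paper: the paper defines exactly the same map $\theta(c)(n)=c_j a_n$ for $p_j<n\le p_{j+1}$ and declares it the needed reduction without further justification. You simply supply the details the paper omits, in particular the basis-constant estimate handling partial sums that end inside a block, which is precisely the nontrivial point.
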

\begin{proof}
The needed reduction $\theta$ from $\mathbb{R}^\mathbb{N}$ to $\mathbb{R}^\mathbb{N}$ can be easily constructed as follows. For any $c\in \mathbb{R}^\mathbb{N}$:
$$\theta(c)(n)=c_j\cdot a_n\  \textrm{if}\  p_j<n\leq p_{j+1}.$$
\end{proof}

Then, more propositions about Banach space are needed. for these propositions we refer to \cite{L-T}

\begin{proposition}(\cite[Proposition 1.a.11]{L-T})
Let $X$ be a Banach space with a Schauder basis $\{x_n\}$. Let $Y$ be a infinite dimensional subspace of $X$. Then there is a subspace $Z$ of $Y$ which has a basis which is equivalent to a block basis of $\{x_n\}$.
\end{proposition}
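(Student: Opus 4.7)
The plan is to apply the classical Bessaga--Pelczynski selection principle: construct a basic sequence inside $Y$ by small-perturbing a block basic sequence of $\{x_n\}$.

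Let $K$ denote the basis constant of $\{x_n\}$ and let $P_n$ be the usual $n$th partial-sum projection, so $\|P_n\|\le K$ for every $n$. Fix positive reals $\epsilon_k$ decreasing fast enough that $\sum_k 2K\epsilon_k < 1$. I would inductively build unit vectors $z_k\in Y$ and integers $0 = n_0 < n_1 < n_2 < \cdots$ so that the block $u_k := (P_{n_k} - P_{n_{k-1}})z_k$ satisfies $\|z_k - u_k\| < \epsilon_k$. At stage $k$: since $Y$ is infinite-dimensional but $P_{n_{k-1}}(X)$ has dimension at most $n_{k-1}$, the restriction $P_{n_{k-1}}|_Y$ has non-trivial kernel, so I may take a unit vector $z_k\in Y$ with $P_{n_{k-1}}z_k=0$; then, using norm-convergence of the basis expansion of $z_k$, I pick $n_k>n_{k-1}$ large enough that $\|z_k - P_{n_k}z_k\|<\epsilon_k$ and set $u_k:=P_{n_k}z_k$, a vector supported strictly on coordinates between $n_{k-1}$ and $n_k$.

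The resulting sequence $\{u_k\}$ is a genuine block basic sequence of $\{x_n\}$, hence a basic sequence with basis constant at most $K$, and each $z_k$ is $\epsilon_k$-close to $u_k$. By the standard small-perturbation principle (Proposition~1.a.9 in \cite{L-T}), the smallness of $\sum_k 2K\epsilon_k$ guarantees that $\{z_k\}$ is itself a basic sequence in $Y$ that is equivalent to $\{u_k\}$. Setting $Z := \overline{\mathrm{span}}\{z_k\}\subseteq Y$ then produces a subspace of $Y$ whose basis $\{z_k\}$ is equivalent to the block basis $\{u_k\}$ of $\{x_n\}$, which is exactly what the proposition claims.

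The main delicate point in this plan is the inductive step, where I need simultaneously $P_{n_{k-1}}z_k=0$ (so that consecutive $u_k$'s have disjoint, increasing supports and genuinely form a block basis) and $\|z_k - P_{n_k}z_k\|<\epsilon_k$. The first comes from a codimension count using that $Y$ is infinite-dimensional while $P_{n_{k-1}}(X)$ is finite-dimensional; the second is just norm-convergence of the basis expansion of $z_k$. Everything after the inductive construction is a direct invocation of the small-perturbation lemma.
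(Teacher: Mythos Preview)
Your argument is correct and is essentially the standard proof given in \cite{L-T} itself: build a block basic sequence $\{u_k\}$ of $\{x_n\}$ that is uniformly close to unit vectors $\{z_k\}$ chosen inside $Y$, then invoke the small-perturbation principle (Proposition~1.a.9 in \cite{L-T}) to conclude that $\{z_k\}$ is a basic sequence equivalent to $\{u_k\}$. The only cosmetic point is the precise smallness condition on the $\epsilon_k$: since the coefficient functionals of the block basis satisfy $\|u_k^*\|\le 2K/\|u_k\|$ and $\|u_k\|>1-\epsilon_k$, one strictly needs $\sum_k 2K\epsilon_k/(1-\epsilon_k)<1$ rather than $\sum_k 2K\epsilon_k<1$; of course this is harmless, as you are free to take the $\epsilon_k$ as small as you like.

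Note, however, that the present paper does not supply its own proof of this proposition --- it is quoted verbatim from \cite{L-T} and used as a black box to deduce the first half of Theorem~1.1 via Lemma~3.1. So there is nothing to compare your approach against beyond the original source, and your write-up matches that source.
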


Using the lemma and the proposition above, we can easily prove the first part of Theorem 1.1, which is the case that if $X$ is a subspace of $Y$. However, this argument only asserts the ``existence'' of a needed equivalence relation, which cannot be satisfied to handle. On the other hand, in some special case, like $l_1$, we can show that if $X$ contains $l_1$ as its closed subspaces, then $\mathbb{R}^\mathbb{N}/l_1 \leq_B E(X,(x_n))$. One way to prove it is to use a well-known result of James.

\begin{theorem}[\textup{James} \cite{J}]
If a normed liner space contains a subspace isomorphic to $l_1$, then, for any positive number $\delta$, there is a sequence $\{u_i\}$ of members of the unit ball such that
$$(1-\delta)\cdot\sum |a_i|\leq \|\sum a_iu_i\|\leq \sum |a_i|$$
for all sequence of numbers $\{a_i\}$.
\end{theorem}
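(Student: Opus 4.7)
The plan is to use a tail-infimum plus gliding-hump construction. Since the normed space contains an isomorphic copy of $\ell_1$, fix a sequence $(x_n)$ in it equivalent to the standard $\ell_1$-basis; after rescaling, assume $\|x_n\|=1$ for all $n$. The triangle inequality then gives $\|\sum a_n x_n\|\leq \sum|a_n|$ automatically, while the assumed $\ell_1$-equivalence supplies a constant $C_1>0$ with $\|\sum a_n x_n\|\geq C_1\sum|a_n|$. The task is to produce a block sequence on which the lower constant jumps from $C_1$ up to $1-\delta$.

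First I would introduce the tail constants
$$\alpha_n=\inf\Bigl\{\Bigl\|\sum_{i\geq n}a_i x_i\Bigr\|:\sum_{i\geq n}|a_i|=1,\ \text{finite support}\Bigr\}.$$
Testing with a single coordinate gives $\alpha_n\leq 1$; the $\ell_1$-equivalence gives $\alpha_n\geq C_1$; and $(\alpha_n)$ is non-decreasing because the infima are taken over progressively smaller index sets. Hence $\alpha:=\lim_n\alpha_n$ exists in $[C_1,1]$.

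Next, given $\delta>0$, choose $\varepsilon>0$ with $(\alpha-\varepsilon)/(\alpha+\varepsilon)\geq 1-\delta$ and pick $N$ so that $\alpha_n>\alpha-\varepsilon$ for all $n\geq N$. I would then inductively build a block sequence $u_j=\sum_{i\in I_j}a_i^{(j)}x_i$ with successive finite intervals $I_1<I_2<\cdots$, $\min I_1\geq N$, $\sum_{i\in I_j}|a_i^{(j)}|=1$, and coefficients chosen (by the definition of the relevant infimum, up to error $\varepsilon$) so that $\|u_j\|\leq\alpha+\varepsilon$. Setting $v_j:=u_j/(\alpha+\varepsilon)$ yields unit-ball vectors. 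For any scalars $(b_j)$, writing $\sum b_j u_j=\sum_i c_i x_i$ with $c_i=b_j a_i^{(j)}$ on $I_j$ gives $\sum|c_i|=\sum|b_j|$ and $\mathrm{supp}(c)\subset[N,\infty)$, so that $\|\sum b_j u_j\|\geq(\alpha-\varepsilon)\sum|b_j|$. Dividing by $\alpha+\varepsilon$ and combining with the trivial $\|\sum b_j v_j\|\leq\sum|b_j|$ yields the required inequality.

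The main obstacle, genuinely the only conceptual one, is establishing $\alpha>0$; this is precisely where the isomorphic embedding of $\ell_1$ is essential, for without it the tail infima could collapse. Everything afterwards is a bookkeeping exercise combining the triangle inequality with the defining property of the infimum. A minor technical wrinkle is replacing the original isomorph by a normalized sequence; this is handled by scaling each $x_n$ by $1/\|x_n\|$ and absorbing the factor into the equivalence constant, which is harmless because the sequence is bounded away from $0$ and $\infty$ in norm.
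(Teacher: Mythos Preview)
The paper does not prove this theorem; it is stated and attributed to James \cite{J} as a known result used to handle the $l_1$ case in the discussion preceding Proposition 3.4. Your argument is correct and is essentially James's original proof: the ``tail infimum'' quantities $\alpha_n$ are non-decreasing, bounded in $[C_1,1]$, and a gliding-hump block sequence chosen close to the limiting infimum gives the desired almost-isometric copy after renormalizing by $\alpha+\varepsilon$. The only point one might spell out more carefully is that the lower estimate $\|\sum b_j u_j\|\geq(\alpha-\varepsilon)\sum|b_j|$ is, strictly speaking, first obtained for finitely supported $(b_j)$ from the definition of $\alpha_N$, and then extended to all $(b_j)\in\ell_1$ by passing to partial sums; this is routine.
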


In fact, on the condition that $X$ is a Banach space, with a basis $\{x_n\}$, we can take $\{u_n\}$ a normalized block basis of $\{x_n\}$ (See the proof of Proposition 2.e.3 in \cite{L-T}). It implies that $\{e_n\}$ in $l_1$ is equivalent to a normalized block basis of $\{x_n\}$ in $X$. Now, using the lemma above, we have $\mathbb{R}^\mathbb{N}/l_1 \leq_B E(X,(x_n))$.

The next one is known as the Bessaga-Pelczynski selection principle.

\begin{proposition}(\cite[Proposition 1.a.12]{L-T})
Let $\{x_n\}$ be a Schauder basis of a Banach space $X$. Let $y_k=\sum\limits_{n=0}^{\infty}a_{n,k}x_n$, $k=1,2\ldots$, be a sequence of vectors such that:

{\rm (1)}\quad $\limsup\limits_k\|y_k\|>0$,

{\rm (2)}\quad $\lim\limits_ka_{n,k}=0$.

Then there is a subsequence $\{y_{k_j}\}$ of $\{y_k\}$ such that it is equivalent to a block basis of $\{x_n\}$
\end{proposition}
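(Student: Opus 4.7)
The plan is to follow the classical Bessaga-Pelczy\'nski argument, which combines a diagonal "sliding hump" extraction with the principle of small perturbations for basic sequences. Throughout, let $K$ denote the basis constant of $\{x_n\}$ in $X$, and let $P_N$ denote the canonical projection $\sum_n c_n x_n \mapsto \sum_{n\le N} c_n x_n$, so that $\|P_N\|\le K$ for all $N$.

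First I would normalize the hypothesis. By (1), passing to a subsequence, one may assume there is a $\delta>0$ with $\|y_k\|\ge \delta$ for every $k$. By (2), for each fixed $n$ the coefficient $a_{n,k}\to 0$ as $k\to\infty$, so by continuity of $P_N$ we get $\|P_N y_k\|\to 0$ as $k\to\infty$ for every fixed $N$. Also, since $y_k=\sum_n a_{n,k}x_n$ converges in $X$, for each $k$ we have $\|(I-P_N)y_k\|\to 0$ as $N\to\infty$.

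Next I would build the block basis by induction. Fix a sequence $\varepsilon_j>0$ with $\sum_j \varepsilon_j < 1/(4K)$. Start with $k_1=1$ and choose $p_1$ so large that $\|(I-P_{p_1})y_{k_1}\| < \varepsilon_1\delta$. Having chosen $k_1<\dots<k_j$ and $p_1<\dots<p_j$, use $\|P_{p_j}y_k\|\to 0$ to pick $k_{j+1}>k_j$ with $\|P_{p_j}y_{k_{j+1}}\|<\varepsilon_{j+1}\delta$, and then pick $p_{j+1}>p_j$ with $\|(I-P_{p_{j+1}})y_{k_{j+1}}\|<\varepsilon_{j+1}\delta$. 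Define the block vector
\[
u_j \;=\; (P_{p_{j+1}}-P_{p_j})\,y_{k_j} \;=\; \sum_{n=p_j+1}^{p_{j+1}} a_{n,k_j}\,x_n,
\]
so that $\|y_{k_j}-u_j\| < 2\varepsilon_j\delta$ and in particular $\|u_j\|\ge \delta-2\varepsilon_j\delta > 0$ for all $j$, hence $\{u_j\}$ is a genuine block basis of $\{x_n\}$.

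Finally I would invoke the small perturbation principle: if $\{u_j\}$ is a (normalized-up-to-constants) basic sequence with lower bound $\|u_j\|\ge \delta/2$, and $\{v_j\}$ satisfies $\sum_j \|v_j-u_j\|/\|u_j\| < 1/(2K')$ where $K'$ is the basis constant of $\{u_j\}$, then $\{v_j\}$ is basic and equivalent to $\{u_j\}$. Since $\{u_j\}$ is a block basis of $\{x_n\}$ it has basis constant $\le K$, and our choice $\sum_j \varepsilon_j < 1/(4K)$ gives
\[
\sum_j \frac{\|y_{k_j}-u_j\|}{\|u_j\|} \;\le\; \sum_j \frac{2\varepsilon_j\delta}{\delta/2} \;=\; 4\sum_j \varepsilon_j \;<\; \frac{1}{K},
\]
which (after replacing $\varepsilon_j$ by a smaller sequence if needed to absorb constants) puts us within the range of the perturbation lemma. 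Hence $\{y_{k_j}\}$ is equivalent to the block basis $\{u_j\}$, as required.

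The main obstacle I foresee is bookkeeping in the inductive step: one must simultaneously control the "head" $P_{p_j}y_{k_{j+1}}$ using (2) and the "tail" $(I-P_{p_{j+1}})y_{k_{j+1}}$ using the convergence of the basis expansion, while keeping the cumulative perturbation below the threshold $1/(2K')$ required by the small perturbation principle. Getting the constants aligned so that both the lower bound $\|u_j\|>0$ and the equivalence of $\{y_{k_j}\}$ and $\{u_j\}$ follow from the same sequence $\{\varepsilon_j\}$ is the delicate part; everything else is routine.
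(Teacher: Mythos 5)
The paper does not give a proof of this proposition; it is simply cited from Lindenstrauss--Tzafriri (Proposition 1.a.12) and used as a black box, so there is no internal proof to compare against. Your argument is the standard Bessaga--Pe\l czy\'nski gliding-hump construction followed by the small-perturbation principle, which is indeed the canonical proof, so the method is the right one.

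There is, however, an off-by-one indexing error in the definition of the block vectors. With your inductive choices, for $j\ge 2$ the vector $y_{k_j}$ has small head up to $p_{j-1}$ (since $k_j$ was picked so that $\|P_{p_{j-1}}y_{k_j}\|<\varepsilon_j\delta$) and small tail beyond $p_j$ (since $p_j$ was picked so that $\|(I-P_{p_j})y_{k_j}\|<\varepsilon_j\delta$). The block approximating $y_{k_j}$ must therefore be
$$u_j=(P_{p_j}-P_{p_{j-1}})\,y_{k_j}\quad (j\ge 2),\qquad u_1=P_{p_1}y_{k_1},$$
supported on $\{p_{j-1}+1,\dots,p_j\}$. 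The vector you actually wrote, $u_j=(P_{p_{j+1}}-P_{p_j})y_{k_j}$, discards the part of $y_{k_j}$ carrying essentially all of its mass: since $\|(I-P_{p_j})y_{k_j}\|<\varepsilon_j\delta$ one gets $\|(P_{p_{j+1}}-P_{p_j})y_{k_j}\|\le \|(I-P_{p_j})y_{k_j}\|+\|(I-P_{p_{j+1}})y_{k_j}\|=O(\varepsilon_j\delta)\to 0$, which contradicts the lower bound $\|u_j\|\ge\delta-2\varepsilon_j\delta$ you then invoke. Your estimates $\|y_{k_j}-u_j\|<2\varepsilon_j\delta$ and $\|u_j\|\ge\delta(1-2\varepsilon_j)$ are correct for the corrected $u_j$, and with that fix the appeal to the small-perturbation lemma (with constants absorbed as you note) completes the proof. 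So the idea is sound; only the bookkeeping in the block definition needs to be repaired.
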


Using Bessaga-Pelczynski selection principle, we can say more when $X$ is a subspace of $Y$. If a Banach space $X$, with a normalize basis $\{x_n\}$ which weakly converges to $0$, is a subspace of $Y$ with a normalized basis $\{y_n\}$, then, for any $k$, there is a sequence of scalars $\{a_{n,k}\}$ such that $x_k=\sum\limits_{n=0}^{\infty}a_{n,k}y_n$. By Proposition 3.4, there is a subsequence $\{x_{k_n}\}$ of $\{x_n\}$ which is equivalent to a block basis $\{u_n\}$ of $\{y_n\}$. Thus, we have $E(X,(x_{k_n}))\leq_B E(Y,(y_n))$. That is $E([x_{k_n}],(x_{k_n}))\leq_B E(Y,(y_n))$.

Combining these arguments, we are ready to complete the proof of Theorem 1.1 as follows:

\begin{proof}
(Theorem 1.1) We can assume that $\{x_n\}$ is a normalized unconditional basis of $X$, is a subspace of $Y$. When $\{x_n\}$ is subsymmetric, It is well known that either $\{x_n\}$ is equivalent to the unit vector basis $\{e_n\}$ of $l_1$, or $\{x_n\}$ weakly converges to $0$ (see \cite{C-S}). Thus, no matter which case happens, $E(X,(x_n)) \leq_B E(Y,(y_n))$.
\end{proof}

For any Banach spaces, using Theorem 1.1, we can obtain following corollaries.

\begin{corollary}\label{3.3}
Let $X$ be a Banach space, which admits a normalized subsymmetric basis $\{x_n\}$. Then  $E(X,(x_n))$ is a minimal element, in the order of the $\leq_B$, of the set $\{E(X,(y_n))\colon \{y_n\} \textrm{ is a basis of } X\}$.
\end{corollary}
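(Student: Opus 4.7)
The plan is to observe that the corollary is essentially an immediate specialization of the second (``furthermore'') clause of Theorem 1.1, applied with $Y=X$. Let $\{y_n\}$ be an arbitrary Schauder basis of $X$, so that the equivalence relation $E(X,(y_n))$ is an arbitrary element of the set
\[
\{E(X,(y_n)) : \{y_n\} \text{ is a basis of } X\}.
\]
Our goal is to produce a Borel reduction $E(X,(x_n)) \leq_B E(X,(y_n))$.

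To carry this out, I take $Y := X$ with the chosen basis $\{y_n\}$, and I view the subspace $X$ (equipped with its normalized subsymmetric basis $\{x_n\}$) as a closed subspace of $Y=X$ in the trivial way. The hypotheses of the second part of Theorem 1.1 are then satisfied verbatim: $\{x_n\}$ is a normalized subsymmetric basic sequence sitting inside the Banach space $Y$ with Schauder basis $\{y_n\}$. Invoking that part of Theorem 1.1 yields precisely
\[
E(X,(x_n)) \leq_B E(Y,(y_n)) = E(X,(y_n)),
\]
which is the desired reduction. Since $\{y_n\}$ was arbitrary, $E(X,(x_n))$ is a lower bound of the set, hence minimal.

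There is essentially no new obstacle here beyond checking that the hypotheses of Theorem 1.1 apply with the ambient space being $X$ itself, which is immediate. The only point worth flagging is that minimality in the statement is to be read in the weak sense ($E(X,(x_n)) \leq_B E(X,(y_n))$ for every basis $\{y_n\}$ of $X$), not as strict minimality; the proof produces this weak form directly and says nothing about whether the relations $E(X,(y_n))$ for different bases might all be Borel bi-reducible. Because the work has already been done inside Theorem 1.1 --- specifically, the dichotomy that a subsymmetric basic sequence either is equivalent to the unit vector basis of $\ell_1$ or converges weakly to $0$, combined with James's theorem and the Bessaga--Pe{\l}czy\'nski selection principle --- the corollary itself requires no further estimates.
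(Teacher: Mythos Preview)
Your proposal is correct and matches the paper's approach exactly: the corollary is stated immediately after Theorem~1.1 as a direct consequence, with no separate proof given, and your application of the ``furthermore'' clause with $Y=X$ is precisely the intended argument. There is nothing to add.
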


The corollary above actually implies that any two Schauder equivalence relations generated by different subsymmetric bases are Borel equivalent to each other. On the other hand, if we consider all Schauder equivalence relations generated by all basic sequences in a Banach space $X$, the corollary above is wrong. We can see counterexamples in Corollary 3.15.

Let $X$ be a Banach space with a Schauder basis $\{x_n\}$. For any $n\in \omega$, the linear functional $x_n^\ast$ on $X$ is defined by $x^\ast_n(\sum\limits_{i=0}^{\infty}a_ix_i)=a_n$ is a bounded linear functionals. Actually, $\|x^\ast_n\|\leq 2K/\|x_n\|$ where $K$ is the basis constant of $\{x_n\}$. We call $\{x_n\}$ \emph{shrinking} if $\{x^\ast_n\}$ form a Schauder basis of $X^\ast$ (see Proposition 1.b.1 in \cite{L-T} ). For another corollary, we need following theorem due to James.

\begin{theorem}(\cite[Theorem 1.c.9]{L-T})
Let $X$ be a Banach space with an unconditional basis $\{x_n\}$. Then $\{x_n\}$ is shrinking if and only if $X$ does not have a subspace isomorphic to $l_1$
\end{theorem}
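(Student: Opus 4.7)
The plan is to prove both implications via contrapositives, using duality for one direction and extracting an $l_1$-block basis for the other. For the direction \emph{shrinking $\Rightarrow$ no $l_1$-subspace}, suppose $Y \subset X$ is isomorphic to $l_1$. By Hahn--Banach, every $g \in Y^\ast$ extends to some $f \in X^\ast$, so the restriction map $X^\ast \to Y^\ast \cong l_\infty$ is surjective, identifying $l_\infty$ as a quotient of $X^\ast$. Since $l_\infty$ is non-separable and quotients of separable Banach spaces are separable, $X^\ast$ must itself be non-separable. But if $\{x_n\}$ is shrinking, the biorthogonal functionals $\{x_n^\ast\}$ form a Schauder basis of $X^\ast$, forcing $X^\ast$ to be separable --- a contradiction.

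For the converse, assume $\{x_n\}$ is not shrinking; I aim to construct a block basic sequence of $\{x_n\}$ equivalent to the unit vector basis of $l_1$. By definition, non-shrinking yields some $f \in X^\ast$, some $\varepsilon > 0$, and vectors $z_k \in [x_n : n \geq n_k]$ of norm at most one with $f(z_k) > \varepsilon$, for a sequence $n_k \to \infty$. Each $z_k$ has a convergent expansion $z_k = \sum_{n \geq n_k} a_{n,k} x_n$, so its tails decay. A standard gliding-hump/perturbation argument then replaces the $z_k$ by a genuine block basic sequence $\{u_k\}$ of $\{x_n\}$ satisfying $\|u_k\| \leq 2$ and $f(u_k) > \varepsilon/2$; this extraction is the one genuinely technical step of the argument.

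Unconditionality then closes the loop. Let $K$ be the unconditional constant of $\{x_n\}$, which also controls the block sequence $\{u_k\}$. For any finite scalars $(a_k)$, choosing signs $\epsilon_k$ with $\epsilon_k a_k = |a_k|$ and applying the unconditional estimate gives
$$\Bigl\|\sum_k a_k u_k\Bigr\| \;\geq\; \frac{1}{K}\Bigl\|\sum_k |a_k| u_k\Bigr\| \;\geq\; \frac{1}{K}\, f\!\Bigl(\sum_k |a_k| u_k\Bigr) \;\geq\; \frac{\varepsilon}{2K}\sum_k |a_k|,$$
while the reverse estimate $\|\sum_k a_k u_k\| \leq 2\sum_k |a_k|$ is immediate from $\|u_k\| \leq 2$. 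This two-sided comparison shows $\{u_k\}$ is equivalent to the $l_1$-basis, so $[u_k] \subset X$ is isomorphic to $l_1$. The main obstacle in the whole argument is the gliding-hump extraction in the previous paragraph; once a block sequence with $f$-values bounded below is in hand, unconditionality mechanically converts a single separating functional into an $l_1$-type lower estimate for arbitrary scalar combinations, and the duality argument for the easy direction is essentially formal.
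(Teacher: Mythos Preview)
The paper does not supply its own proof of this theorem; it is simply quoted from \cite[Theorem~1.c.9]{L-T} and then invoked to deduce the subsequent corollary. Your argument is correct and is essentially the classical proof one finds in that reference: the easy direction via non-separability of $l_\infty$ as a quotient of $X^\ast$, and the substantive direction by extracting, via a gliding-hump, a block basic sequence on which a single functional stays bounded below, then using unconditionality to upgrade this to an $l_1$-equivalence. There is nothing to compare against in the present paper beyond the citation itself.
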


It can be easily checked that any shrinking basis weakly converge to $0$. For any $X$, we denote $\mathcal{A}_X$ for the class, which contains all equivalence relations of the form $E(X,(x_n))$, where $\{x_n\}$ is a basic sequence in $X$. Then we arrive following corollary now.

\begin{corollary}
Let $X$ be a Banach space having unconditional bases, then for any two unconditional bases $\{x_n\}$ and $\{y_n\}$, $E(X,(x_n))$ and $E(X,(y_n))$ are compatible in $\mathcal{A}_X$.
\end{corollary}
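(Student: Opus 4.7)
The plan is to produce, for any two unconditional bases $\{x_n\}$ and $\{y_n\}$ of $X$, a single basic sequence $\{z_n\} \subset X$ such that $R := E(X,(z_n))$ is Borel reducible to both $E(X,(x_n))$ and $E(X,(y_n))$. I will split according to whether $X$ contains an isomorphic copy of $l_1$, and in each case apply Lemma 3.1 to turn a block-basis or a subsequence consideration into the required Borel reduction. Without loss of generality both bases are normalized, since rescaling the basis vectors gives a basis-equivalent sequence and hence a $\sim_B$-equivalent Schauder equivalence relation.

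First suppose $l_1$ embeds into $X$. The remark after Theorem 3.2 lets me pick a normalized block basis $\{u_n\}$ of $\{x_n\}$ equivalent to the unit vector basis $\{e_n\} \subset l_1$, and similarly a normalized block basis $\{v_n\}$ of $\{y_n\}$ equivalent to $\{e_n\}$. Lemma 3.1 gives $E(X,(u_n)) \leq_B E(X,(x_n))$ and $E(X,(v_n)) \leq_B E(X,(y_n))$, while the equivalence of both $\{u_n\}$ and $\{v_n\}$ with $\{e_n\}$ forces $E(X,(u_n)) \sim_B \mathbb{R}^\mathbb{N}/l_1 \sim_B E(X,(v_n))$. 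Setting $R := E(X,(u_n))$ witnesses compatibility.

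Now suppose $X$ does not contain $l_1$. By Theorem 3.6 both $\{x_n\}$ and $\{y_n\}$ are shrinking, and shrinking bases are weakly null in $X$. Expanding $x_k = \sum_n a_{n,k} y_n$ in the basis $\{y_n\}$, for each fixed $n$ the coefficient $a_{n,k} = y_n^\ast(x_k)$ tends to $0$ as $k\to\infty$ (since $y_n^\ast$ is a bounded linear functional and $\{x_k\}$ is weakly null), while $\|x_k\| = 1$ guarantees $\limsup_k\|x_k\| > 0$. Proposition 3.4 (Bessaga--Pelczynski) then yields a subsequence $\{x_{k_j}\}$ of $\{x_n\}$ equivalent to a block basis $\{u_j\}$ of $\{y_n\}$. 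Put $R := E(X,(x_{k_j}))$: Lemma 3.1 directly gives $R \leq_B E(X,(x_n))$ via the subsequence reduction and $E(X,(u_j)) \leq_B E(X,(y_n))$ via the block-basis reduction, and the equivalence of $\{x_{k_j}\}$ with $\{u_j\}$ upgrades the latter to $R \leq_B E(X,(y_n))$.

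The main technical point I anticipate is confirming that the hypotheses of Bessaga--Pelczynski genuinely apply in the second case, specifically that the basis $\{x_n\}$ is weakly null; this is secured by applying Theorem 3.6 symmetrically, since $\{x_n\}$ is also an unconditional basis in a Banach space with no $l_1$ subspace, hence shrinking, hence weakly null once normalized. Everything else is a routine bookkeeping of the reductions from Lemma 3.1, and each basic sequence I employ is either a block basis or a subsequence of one of the given bases of $X$, so the witness $R$ visibly lies in $\mathcal{A}_X$.
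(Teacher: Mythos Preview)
Your proof is correct and follows essentially the same route as the paper: a case split on whether $X$ contains $l_1$, using the James block-basis argument in the first case and Theorem 3.6 plus Bessaga--Pelczynski (Proposition 3.4) in the second, with Lemma 3.1 supplying the reductions. You are slightly more explicit than the paper in naming the witness $R$ as a block basis or subsequence inside $X$ so that it visibly lies in $\mathcal{A}_X$, but the underlying argument is identical.
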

\begin{proof}
If $X$ contains a copy of $l_1$, Then both $\mathbb{R}^\mathbb{N}/l_1\leq_B E(X,(x_n))$ and $\mathbb{R}^\mathbb{N}/l_1\leq_B E(X,(y_n))$ hold. If not, $\{x_n\}$ is shrinking and thus there is a subsequence $\{x_{k_n}\}$ of $\{x_n\}$ which is equivalent to a block basis $\{u_n\}$ of $\{y_n\}$. Thus we have $E(X,(x_{k_n}))\leq_B E(X,(y_n))$. Together with $E(X,(x_{k_n}))\leq_B E(X,(x_n))$, the conclusion is arrived.
\end{proof}

Based on the corollary above, a question arises naturally. The following one is asked by Liu:

\begin{question}[Rui Liu]
Whether there is a Banach space $X$ with two unconditional bases $\{x_n\}$ and $\{y_n\}$ such that $E(X,(x_n))$ and $E(X,(y_n))$ are not Borel equivalent?
\end{question}

For a kind of natural generalization of Theorem 1.1 for the case that $X$ is a subspace of $Y$, one may want to see what will happen if $\{x_n\}$ is not necessarily unconditional. Here we mention a famous result of Rosenthal. It may be helpful to clarify the situation to some extent. We say a sequence $\{x_n\}$ \emph{weak Cauchy} if for any function $x^*\in X^*$, we have $\lim\limits_{n\rightarrow \infty} x^*(x_n)$ exists.(see \cite{R})

\begin{theorem}[\textup{Rosenthal} \cite{R}]
Let $\{x_n\}$ be a bounded sequence in a Banach space $X$. Then, $\{x_n\}$ has a subsequence $\{x_{n_i}\}$ satisfying one of the two mutually exclusive alternatives:

{\rm (1)}\quad $\{x_{n_i}\}$ is equivalent to the unit vector basis of $l_1$.

{\rm (2)}\quad $\{x_{n_i}\}$ is a weak Cauchy sequence.
\end{theorem}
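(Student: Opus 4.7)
The plan is to prove the dichotomy by contrapositive: assuming that no subsequence of $\{x_n\}$ is weak Cauchy, I would produce a subsequence equivalent to the unit vector basis of $\ell_1$.

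My first move would be to translate "no weak Cauchy subsequence" into concrete oscillation data. If $\{x_{n_i}\}$ fails to be weak Cauchy then some $g \in X^*$ makes $g(x_{n_i})$ non-Cauchy in $\mathbb{R}$; by passing to a further subsequence and using the countability of the rationals, one can fix rationals $a < b$ and a bound $C$ such that, for every infinite $M \subseteq \omega$, there is $f_M \in X^*$ with $\|f_M\| \le C$ and both $\{n \in M : f_M(x_n) > b\}$ and $\{n \in M : f_M(x_n) < a\}$ infinite. This reduction relies on Hahn-Banach together with the boundedness of $\{x_n\}$.

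The combinatorial heart of the proof would be the following lemma: given any sequence $(A_n, B_n)$ of pairs of disjoint subsets of $\omega$, there is an infinite $M \subseteq \omega$ on which the restricted pairs are Boolean independent, in the sense that $\bigcap_{n \in F}(A_n \cap M) \cap \bigcap_{n \in G}(B_n \cap M) \neq \emptyset$ for every pair of disjoint finite $F, G \subseteq M$. I would derive this via a Galvin-style Ramsey argument on analytic subsets of $[\omega]^\omega$, or via Rosenthal's original tree-pruning construction. This is the main obstacle; the rest of the argument is a mechanical consequence.

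Applying the lemma to the pairs $A_n = \{k : f_n(x_k) > b\}$ and $B_n = \{k : f_n(x_k) < a\}$ supplied by the first step yields, after a diagonalization, a subsequence $\{x_{n_k}\}$ with the separation property: for every disjoint pair of finite $F, G \subseteq \omega$ there is $h \in X^*$ with $\|h\| \le C$, $h(x_{n_k}) > b$ for $k \in F$, and $h(x_{n_k}) < a$ for $k \in G$. To close the argument I would split arbitrary scalars $\{c_k\}$ by sign, take $F = \{k : c_k > 0\}$ and $G = \{k : c_k < 0\}$, and apply the corresponding $h$ to obtain $\|\sum_k c_k x_{n_k}\| \ge \frac{b-a}{2C}\sum_k |c_k|$; the matching upper bound is immediate from $\sup_n \|x_n\| < \infty$, giving the desired equivalence with the unit vector basis of $\ell_1$. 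That the two alternatives are mutually exclusive is clear, since the $\ell_1$ basis admits no weak Cauchy subsequence (apply any functional separating $e_n$ from $e_{n+1}$).
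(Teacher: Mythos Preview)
The paper does not prove this theorem; it is quoted with attribution to Rosenthal \cite{R} and used purely as an external black box in the discussion following Theorem~1.1, so there is no in-paper argument to compare against. Your outline is a faithful sketch of the standard proof (Rosenthal's original combinatorial argument, or its Ramsey-theoretic repackaging): reduce the failure of weak Cauchyness to oscillation over a fixed rational gap, extract a Boolean-independent family of level sets, and read off the $\ell_1$ lower bound from a sign-splitting of the coefficients. The only place I would tighten is the ``first move'': obtaining a \emph{single} pair $a<b$ and bound $C$ valid for every infinite $M$ requires a genuine pigeonhole pass over the countable family of rational gaps (or, equivalently, the reformulation of $\{x_n\}$ as uniformly bounded functions on $B_{X^*}$), and this step deserves a sentence more than you give it. Otherwise the plan is sound.
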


Now, we work in the case that $X$ is a subspace of $Y$, with a conditional normalized basis $\{x_n\}$ in $X$. It is clear that the proof of Theorem 1.1 highly depend on Proposition 3.4, which deals with the situation that $\{x_n\}$ weakly converges to $0$. In the meantime, James's result allows us to handle the space $l_1$. By applying theorem 3.9 we can pass $\{x_n\}$ to one of its subsequences, and the last case that we need to find out is when $\{x_{n_i}\}$ is a weak Cauchy sequence but not weakly converges to $0$ (i.e. \emph{non-trivial weak Cauchy}, see \cite{Ro}). The typical example is $c$ with its summing basis. Thus if one answer the following question, Theorem 1.1 in the case of conditional basis will be completed easily.

\begin{question}
Let $Y$ be a Banach space, with $\{y_n\}$ being a Schauder basis. If $X$, with a normalized non-trivial weak Cauchy basic sequence $\{x_n\}$, is a subspace in $Y$, Whether there is a subsequence $\{x_{b_n}\}$ of $\{x_n\}$, such that $E(X,(x_{b_n})) \leq_B E(Y,(y_n))$?
\end{question}

Now, we address Theorem 1.2. One of a standard approach to address this kind of theorem is involved. This approach was firstly used by Louveau and Velickovic (see \cite{L-V}) and developed by Dougherty and Hjorth (see \cite{D-H}). Given a reduction $\theta$ from $E$ to $F$, one can reorganize it to obtain another reduction $\theta'$ which is not only continuous but ``modular". It means that the sequence in the range of $\theta'$ are built by finite blocks, each of which depends on only one coordinate of the argument to the function. In this case, we call that $\theta'$ witness that $E\leq_A F$.  Before we give the proof of Theorem 1.2, we would like to provide a definition, which is initiated in \cite{D-H}.

\begin{definition}(\cite[Definition 2.1]{D-H})
\emph{Let $\vec{\epsilon}=(\epsilon_i)_{i<\omega}$, let $\mathbb{Z}(\vec{\epsilon})$ denote the set of all $x\in \mathbb{R}^\mathbb{N}$ such that $x(n)$ is an integer multiple of $\epsilon_n$ for all $n\in \omega$}.
\end{definition}

We can easily see that $\mathbb{Z}(\vec{\epsilon})$ and $\mathbb{Z}(\vec{\epsilon})\cap[-1,1]^{\mathbb{N}}$ are both Polish spaces. Now we are ready to prove Theorem 1.2.

\begin{proof}
(Theorem 1.2) we also assume that both $\{x_n\}$ and $\{y_n\}$ are normalized basis and just need to check there is no such reduction $\theta$ from $\mathbb{Z}(\vec{\epsilon})\cap[-1,1]^{\mathbb{N}}\rightarrow \mathbb{R}^\mathbb{N}$. Here, the value of $\vec{\epsilon}$ can be interpreted by $\epsilon_i=2^{-i}$.

The following steps (claim 1-4) is modified from the proof of \cite{D-H}, Theorem 2.2 claim (1-4).

\emph{\textbf{Claim 1}.}
$\forall j,k\in \mathbb{N}$, $\exists l\in \mathbb{N}$ and $s^{\ast}$ with $s^{\ast}(i)=m\epsilon_{k+i}$ for some $m\in \mathbb{Z}$ such that $|m\epsilon_{k+i}|\leq1$ for all $i<\textrm{len}(s^{\ast})$ and a comeager set $D\subseteqq \mathbb{Z}(\vec{\epsilon})\cap[-1,1]^{\mathbb{N}}$ s.t. for all $u,\hat{u}\in D$, if $u=rs^{\ast}w$ and $\hat{u}=\hat{r}s^{\ast}w$ for some $r$, $\hat{r}\in \mathbb{R}^{k}$ and $y\in \mathbb{R}^\mathbb{N}$, then: $\|\sum\limits_{i=l+1}^{\infty}(\theta(u)(i)-\theta(\hat{u})(i))y_i\|<2^{-j}$.

\begin{proof}
(Claim 1.) For each $l\in \mathbb{N}$, define $F_l$ from $\mathbb{Z}(\vec{\epsilon})\cap[-1,1]^{\mathbb{N}}$ to $\mathbb{R}^\mathbb{N}$ by:
$$F_l(u)=\max_{z,\hat{z}}\|\sum_{i=l+1}^{\infty}(\theta(z)(i)-\theta(\hat{z})(i))y_i\|,$$
where $z,\hat{z}$ are elements of $\mathbb{Z}(\vec{\epsilon})\cap[-1,1]^{\mathbb{N}}$ s.t. $\forall i\geq k$ $z(i)=\hat{z}(i)$. since $z-\hat{z}\in {\rm coef}(X,(x_n))$, $\theta(z)-\theta(\hat{z})\in {\rm coef}(Y,(y_n))$.  Thus,$\forall\epsilon$ $\exists N$ $\forall n>N$ $\|\sum\limits_{i=l+1}^{\infty}(\theta(z)(i)-\theta(\hat{z})(i))y_i\|<\epsilon$. Thus,  $\lim\limits_{l\rightarrow \infty}\|\sum\limits_{i=l+1}^{\infty}(\theta(z)(i)-\theta(\hat{z})(i))y_i\|=0$ and $\lim\limits_{l\rightarrow \infty}F_l(u)=0$ for all $u$. Therefore, for
 $\forall j$, we have $\forall u$ $\exists l$ $F_l(u)<2^{-j}$. If we denote $K_l=\{u \colon F_l(u)<2^{-j}\}$, $\bigcup K_l=\mathbb{Z}(\vec{\epsilon})\cap[-1,1]^{\mathbb{N}}$. Thus, there is a $l$ such that $K_l$ is not meager and then there are some finite sequence $t$ s.t. $N_t\Vdash K_l$. We can also extends $t$ s.t. $\textrm{len}(t)\geq k$. Take $t=r^{\ast}s^{\ast}$ where $\textrm{len}(r^{\ast})=k$. Consider $f\colon N_{r^{\ast}}\rightarrow N_r$ where $\textrm{len}(r^{\ast})=\textrm{len}(r)=k$. by $f(u)=\hat{u}$ s.t. $u(i)=\hat{u}(i)$ for $i\geq\textrm{len}(r)=k$.
 By the definition of $F_l(u)$, $u\in K_l$ iff $f(u)\in K_l$. and then we have $\forall r$ s.t. $\textrm{len}(r)=k$, $N_{rs^{\ast}}\Vdash K_l$. At last, take $D=\mathbb{Z}(\vec{\epsilon})\cap[-1,1]^{\mathbb{N}}-\bigcup_r(N_{rs^{\ast}}-K_l)$.
\end{proof}

We fix a dense $G_\delta$ set $C\subseteqq \mathbb{Z}(\vec{\epsilon})\cap[-1,1]^{\mathbb{N}}$ on which $\theta$ is continuous.

\emph{\textbf{Claim 2}.}
$\forall j,k,l\in \mathbb{N}$, $\exists s^{\ast\ast}$ with $s^{\ast\ast}(i)=m\epsilon_{k+i}$ for some $m\in \mathbb{Z}$ such that $|m\epsilon_{k+i}|\leq1$ for all $i<\textrm{len}(s^{\ast\ast})$ s.t. $\forall u,\hat{u}\in C$, if $x=rs^{\ast\ast}w$ and $\hat{x}=rs^{\ast\ast}\hat{w}$ for some $r\in \mathbb{R}^k$ and $w,\hat{w}\in \mathbb{R}^\mathbb{N}$, then $\|\sum\limits_{i=0}^{l}(\theta(u)(i)-\theta(\hat{u})(i))y_i\|<2^{-j}$

Furthermore, if $G$ is dense open in $\mathbb{Z}(\vec{\epsilon})\cap[-1,1]^{\mathbb{N}}$, then $s^{\ast\ast}$ can be chosen s.t. $N_{rs^{\ast\ast}}\subseteqq G$ for all $r\in \mathbb{R}^k$ s.t. $r(i)=m\epsilon_{i}$ for some $m$ such that $|m\epsilon_{i}|\leq 1$

\begin{proof}
(Claim 2.) we enumerate such $r$ by $r_0, r_1..... r_V$. By induction, define $t_0=\varnothing$. suppose we have $t_m$ as $C$ is comeager, there is a $z\in N_{r_mt_m}\cap C$. since $\theta$ is continuous on $C$, there is a neighborhood $O$ of $z$ s.t. for $\forall x,\hat{x}\in C\cap O$, $\sum\limits_{i=0}^{l}|\theta(u)(i)-\theta(\hat{u})(i)|<2^{-j}$. Thus, $\|\sum\limits_{i=0}^{l}(\theta(u)(i)-\theta(\hat{u})(i))y_i\|<\sum\limits_{i=0}^{l}|\theta(u)(i)-\theta(\hat{u})(i)|<2^{-j}$. this $O$ can be taken as $N_{r_m\tilde{t}_m}$ s.t. $t_m\subseteqq\tilde{t}_m$, and we can extends $\tilde{t}_m$ to be $t_{m+1}$ such that $N_{r_mt_{m+1}}\subseteqq G$ as $G$ is dense open. At last take $s^{\ast\ast}=t_V$.

 Then $x=r_ms^{\ast\ast}y$ and $\hat{x}=r_ms^{\ast\ast}\hat{y}$ imply that $x,\hat{x}$ are in $N_{r_ms^{\ast\ast}}\subseteqq N_{r_mt_{m}}$, then $\|\sum\limits_{i=0}^{l}(\theta(u)(i)-\theta(\hat{u})(i))y_i\|<2^{-j}$
\end{proof}

 By repeating apply claim 1 and claim 2 to define number sequences $b_0<b_1<b_2....$, $l_0<l_1<l_2.....$, finite sequences $s_0,s_1,s_2...$ and dense open sets $D^j_i\subseteqq \mathbb{Z}(\vec{\epsilon})\cap[-1,1]^{\mathbb{N}}$ $i,j\in \mathbb{N}$. Let $b_0=l_0=0$,suppose we have $b_j,l_j$ and $D_i^{u}$ for $u<j$. Using claim 1 for the $j$ with $k=b_j+1$ to get $l_{j+1}$, a finite sequence $s_j^{\ast}$ and a comeager set $D^j$. we can assume that $l_{j+1}>l_j$ and $D^j\subseteqq C$.

 Let $D^j_0\supseteqq D^j_1\supseteqq D^j_2.....$ be dense open sets of $\mathbb{Z}(\vec{\epsilon})\cap[-1,1]^{\mathbb{N}}$ s.t. $\bigcap^{\infty}_{i=0}D^j_i\subseteqq D^j$. Now apply claim 2 with $j$,$k=b_j+1+\textrm{len}(s^{\ast}_j)$, $l=l_{j+1}$ and $G=\bigcap^{j}_{j'=0}D^{j'}_j$ to obtain $s^{\ast\ast}_j$.

 Let $s_j=s_j^{\ast}s_j^{\ast\ast}$ and $b_{j+1}=b_j+\textrm{len}(s_j)+1$ let $C'$ be the set of all $u\in \mathbb{Z}(\vec{\epsilon})\cap[-1,1]^{\mathbb{N}}$ of the form $\langle a_0\rangle s_0 \langle a_1\rangle ...$. Easily we have $\forall u,\hat{u}\in C'$

 {\rm (1)}\quad if $u(b_i)=\hat{u}(b_i)$ for all $i\geq j+1$, then $\|\sum\limits_{i=l_{j+1}}^{\infty}(\theta(u)(i)-\theta(\hat{u})(i))y_i\|<2^{-j}$.

 {\rm (2)}\quad if $u(b_i)=\hat{u}(b_i)$ for all $i\leq j$, then  $\|\sum\limits_{i=0}^{l_{j+1}}(\theta(u)(i)-\theta(\hat{u})(i))y_i\|<2^{-j}$

 As $\{x_{b_n}\}$ does not dominate the $\{y_n\}$, it implies that there is a sequence $\{\delta_i\}$ such that $\sum\delta_ix_{b_i}$ converges but $\sum\delta_iy_i$ diverges. Here, as $\sum\delta_ix_{b_i}$ converges, $\lim\limits_{i\rightarrow \infty} \delta_i=0$. Then, we can assume that $|\delta_i|<1/2$. we can also assume that for any $i$, $|\delta_i|>\epsilon_{b_i}=2^{-b_i}$ by adding $\epsilon_{b_i}$ to the original $|\delta_i|$. Then we obtain that $\epsilon_{b_i}<|\delta_i|<1$

 Now define a function $g$ from $ \mathbb{Z}(\vec{\delta})\cap[-1,1]^{\mathbb{N}}$ to $ \mathbb{Z}(\vec{\epsilon})\cap[-1,1]^{\mathbb{N}}$ as follows:

 Firstly, we set that $g(x)$ is of the form $\langle a_0\rangle s_0 \langle a_1\rangle ...$. Then, we just need to define the value of $g(x)$ in $b_i$ as follows.

 $g(x)(b_i)=q_i\epsilon_{b_i}$ if $x(b_i)=p_i\delta_i$ such that $|q_i|$ is the max number  s.t. $|p_i\delta_i-q_i\epsilon_{b_i}|<2^{-b_i}$.

 The function is well-defined. As $|\delta_i|>\epsilon_{b_i}=2^{-b_i}$. if $\delta_i>0$, by the induction of $p\in \mathbb{N}$, we can easily prove that for all $p\in \mathbb{N}$, there is a $q\in \mathbb{N}$ such that $p\delta_i=q\epsilon_{b_i}+\mu$, where, $\mu <\epsilon_{b_i}$. For the case that $\delta_i<0$, the same method works.

 It is easy to check that $g(u)\in C'$ and we need to further check that $u-\hat{u}\in {\rm coef}(X,(x_{b_n}))$ iff $g(u)-g(\hat{u})\in {\rm coef}(X,(x_n))$. As $m_i=|g(u)(b_i)-g(\hat{u})(b_i)-(u(i)-\hat{u}(i))|<2^{-b_i+1}$, $\sum m_i$ converges. Consequently, $\sum(g(u)(b_i)-g(\hat{u})(b_i))x_{b_i}$ converges iff $\sum (u(i)-\hat{u}(i))x_{b_i}$ converges.

Then we have followings:
 $\sum(g(u)(i)-g(\hat{u})(i))x_i$ converges iff $\sum(g(u)(b_i)-g(\hat{u})(b_i))x_{b_i}$ converges iff  $\sum (u(i)-\hat{u}(i))x_{b_i}$ converges.

Now,define $\theta'\colon \mathbb{Z}(\vec{\delta})\cap[-1,1]^{\mathbb{N}}\rightarrow \mathbb{R}^\mathbb{N}$ by for all $j$ and all $m$ such that $l_j<m \leq l_{j+1}$, define

 $\theta'(u)(m)=\theta(g(e_j(u)))(m)$ where
 \[e_j(u)(i)=\left\{\begin{array}{ll}
u(j)&\textrm{if $i=j$}\\
0&\textrm{if $i\neq j$}
\end{array}\right.\]

In fact, we can take $\theta'(u)$ of the form $f_0(u(0))^{\smallfrown}f_1(u(1))^{\smallfrown}f_2(u(2))...$, where $f_j(a)=\theta(g(\langle 0,0,...0,a,0....\rangle))|(l_j,l_{j+1}]$

\emph{\textbf{Claim 3}.}
$\forall u,\hat{u}\in \mathbb{Z}(\vec{\delta})\cap[-1,1]^{\mathbb{N}}$, $u-\hat{u}\in {\rm coef}(X,(x_{b_n}))$ iff $\theta'(u)-\theta'(\hat{u})\in {\rm coef}(Y,(y_n))$.

\begin{proof}
(Claim 3.) we need to show $\theta'(u)-\theta(g(u))\in {\rm coef}(Y,(y_n))$

 Define \[e'_j(u)(i)=\left\{\begin{array}{ll}
u(j)&\textrm{if $i\leq j$}\\
0&\textrm{if $i>j$}
\end{array}\right.\]

by the claim 1,2 we have followings:

$\|\sum\limits_{i=0}^{l_{j+1}}(\theta(g(u))(i)-\theta(g(e_j'(u)))(i))y_i\|<2^{-j}$ and

$\|\sum\limits_{i=l_j+1}^{\infty}(\theta(g(e_j'(u)))(i)-\theta(g(e_j(u)))(i))y_i\|<2\cdot2^{-j}$

As the condition 2 of $\{y_n\}$ implies $\{y_n\}$ is unconditional, we have $\|\sum\limits_{i=l_j+1}^{l_{j+1}}(\theta(g(e_j'(u)))(i)-\theta(g(e_j(u)))(i))y_i\|<R\|\sum\limits_{i=0}^{l_{j+1}}(\theta(g(e_j'(u)))(i)-\theta(g(e_j(u)))(i))y_i\|$ for some $R$.

Therefore, we have $\|\sum\limits_{i=l_j+1}^{l_{j+1}}((\theta(g(u))(i)-\theta'(u)(i))y_i\|<(R+2)\cdot2^{-j}$.

Thus, $\forall \epsilon$, for sufficient large $j$ the followings hold.

$\|\sum\limits_{i=l_j+1}^{\infty}((\theta(g(u))(i)-\theta'(u)(i))y_i\|\leq \sum\limits_{n=j}^{\infty}\|\sum\limits_{i=l_j+1}^{l_{j+1}}((\theta(g(u))(i)-\theta'(u)(i))y_i\|<\epsilon$.

Thus, $\theta'(u)-\theta(g(u))\in {\rm coef}(Y,(y_n))$.
\end{proof}

\emph{\textbf{Claim 4}.}
There are $C\in \mathbb{R}^{+},N\in \omega$ such that $\forall j>N$, $\forall u\in \textrm{dom}(f_j)$ s.t.$|u|> \frac{1}{2}$, $\|\sum\limits_{i=l_j+1}^{l_{j+1}}(f_j(u)(i)-f_j(0)(i))y_i\|>C$.

\begin{proof}
(Claim 4.) Assume not, $\forall n$, $N_n=\max\{n,j_{n-1}\}$, $\exists j_n>N_n$ and $\exists u_n\in dom(f_{j_n})$ s.t. $|u_n|> \frac{1}{2}$ s.t.

$\|\sum\limits_{i=l_{j_n}+1}^{l_{j_n+1}}(f_{j_n}(u_n)(i)-f_{j_n}(0)(i))y_i\|<2^{-n}$

Take $\hat{u}=\vec{0}$ and $u$ s.t. \[u(i)=\left\{\begin{array}{ll}
u_n&\textrm{if $i=j_n$}\\
0&\textrm{otherwise}
\end{array}\right.\]
$u-\hat{u}=x\nrightarrow 0$. However, for sufficient large $m$, $\|\sum\limits_{i=l_{j_m+1}}^{\infty}(\theta'(u)(i)-\theta'(\hat{u})(i))y_i\|\leq \sum\limits_{n=m}^{\infty}\|\sum\limits_{i=l_{j_n+1}}^{l_{{j_n}+1}}(f_{j_n}(u_n)(i)-f_{j_n}(0)(i))y_i\|\leq \sum\limits_{n=m}^{\infty}2^{-n}\leq \epsilon$.

A contradiction.
\end{proof}

As $\sum\delta_ix_{b_i}$ converges, then, $\delta_j\rightarrow 0$. Thus, for $N$ occurs in the claim 4,  there is a $M\geq N$ such that $j>M$ implies that $|\delta_j|<1/2$. In the case that $j>M$, let $k_j\in \omega$ satisfies that $\gamma_j=k_j\delta_j$ such that $|\gamma_j|\in [\frac{1}{2},1]$. By claim 4, $\|\sum\limits_{i=l_j+1}^{l_{j+1}}(f_j(\gamma_j)(i)-f_j(0)(i))y_i\|>C$ for some $C$. Then
$\|\sum\limits_{i=l_j+1}^{l_{j+1}}(f_j(\delta_j)(i)-f_j(0)(i))y_i\|+....+\|\sum\limits_{i=l_j+1}^{l_{j+1}}(f_j(\gamma_j)(i)-f_j((k_i-1)\delta_j)(i))y_i\|>C$.

Therefore, there is some number $n_i\leq k_i$ s.t. $\|\sum\limits_{i=l_j+1}^{l_{j+1}}(f_j((n_j+1)\delta_j)(i)-f_j(n_j\delta_j)(i))y_i\|\geq C/k_j=C\delta_j/\gamma_j$.

Define a block basis of $\{y_n\}$ by $S_j=\sum\limits_{i=l_j+1}^{l_{j+1}}(f_j((n_j+1)\delta_j)(i)-f_j(n_j\delta_j)(i))y_i$. If $j>M$, then $\|S_j\|\geq C\delta_j/\gamma_j$.

set $u$ s.t.$u(i)=(n_i+1)\delta_i$ and $\hat{u}$ s.t. $\hat{u}(i)=n_i\delta_i$. Then $\sum (u(i)-\hat{u}(i))x_{b_i}=\sum\delta_ix_{b_i}$ converges.
Take $\{s_j=S_j/\|S_j\|\}$ to be a normalized block bases of $\{y_j\}$. Undoubtedly, $\sum S_j$ converges iff $\sum_{j=M}^{\infty}\|S_j\|s_j$ converges. Then, as $\{y_j\}$ are lower semi-homogeneous, $\sum_{j=M}^{\infty}\|S_j\|s_j$ converging implies that $\sum_{j=M}^{\infty}\|S_j\|y_j$ converges.

As $\|S_j\|\geq C\delta_j/\gamma_j \geq C\delta_j$ for $j>M$ and $\{y_j\}$ lower semi-homogeneous implying that it is unconditional, the following holds by applying proposition 2.6:
$\sum S_j$ converging implies that $\sum_{j=M}^{\infty}\delta_jy_j$ converges. Now, as $\sum_{j=M}^{\infty}\delta_jy_j$ diverges, $\sum (\theta'(u)(i)-\theta'(\hat{u})(i))y_i=\sum S_j$ diverges. It means that $\theta'$ is not a reduction.

A contradiction to claim 3.
\end{proof}

Using Theorem 1.2 we can also arrive a lot of interesting corollaries. Some known results about classical sequence Banach spaces, due to Dougherty and Hjorth, can be arrived by this theorem. The proof is easy if we notice the difference of two spaces and all natural basis of these space are subsequence equivalent and perfectly homogeneous, thus lower semi-homogeneous.

\begin{corollary}[\textup{Dougherty and Hjorth,} \cite{D-H},\cite{H}]
For classical sequences Banach spaces $l_p$, where $p \geq 1$ and $c_0$, we have $c_0 \nleq_B l_p$ for any $p \geq 1$ and $l_q \nleq_B l_p$ if $p<q$.
\end{corollary}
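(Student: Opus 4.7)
The plan is to apply Theorem 1.2 directly to both cases, using the unit vector bases $\{e_n\}$ throughout. Recall that $\mathbb{R}^\mathbb{N}/c_0 = E(c_0,(e_n))$ and $\mathbb{R}^\mathbb{N}/l_p = E(l_p,(e_n))$, so the target non-reductions translate to $E(c_0,(e_n)) \nleq_B E(l_p,(e_n))$ and $E(l_q,(e_n)) \nleq_B E(l_p,(e_n))$ for $p<q$. For each instance, one plays the role of $(X,\{x_n\})$ and the other of $(Y,\{y_n\})$ in Theorem 1.2; the verification splits cleanly into checking hypothesis (1) (no subsequence of $\{x_n\}$ dominates $\{y_n\}$) and hypothesis (2) ($\{y_n\}$ is lower semi-homogeneous).

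For hypothesis (2), I would simply invoke the well-known fact (Theorem 2.a.9 in \cite{L-T}, mentioned in the preliminaries) that the unit vector basis of $l_p$ for any $p \in [1,\infty)$ is perfectly homogeneous and hence lower semi-homogeneous. So whenever the target on the right is an $l_p$ space with its canonical basis, condition (2) is immediate.

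For hypothesis (1), the key observation is that the unit vector bases of $c_0$ and $l_p$ are subsymmetric, so any subsequence $\{e_{b_n}\}$ is equivalent to the full basis $\{e_n\}$; hence $\{e_{b_n}\}$ dominates $\{e_n\}_{l_p}$ if and only if the full unit vector basis of $X$ dominates that of $l_p$, which in turn is equivalent to the containment $\mathrm{coef}(X,(e_n)) \subseteq \mathrm{coef}(l_p,(e_n))$. In the first case ($X=c_0$), this would mean every null sequence lies in $l_p$, which fails for example at $(1/\log(n+2))$. In the second case ($X=l_q$ with $p<q$), it would mean $l_q \subseteq l_p$, which fails for the sequence $(n^{-1/p})$, an element of $l_q \setminus l_p$ since $q/p > 1$. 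So in both cases hypothesis (1) holds.

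Having verified both hypotheses, Theorem 1.2 yields the two non-reductions, which is precisely the content of the corollary. The step I expect to do any actual thinking on is the subsymmetry reduction for hypothesis (1)---one must note that domination by any subsequence is equivalent to domination by the full basis in the subsymmetric setting, so that the problem reduces to the plain non-inclusions $c_0 \not\subseteq l_p$ and $l_q \not\subseteq l_p$ (for $p<q$), both of which are classical. All other steps are direct appeals to facts already recorded in the paper.
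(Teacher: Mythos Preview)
Your proposal is correct and follows essentially the same approach as the paper: apply Theorem 1.2, use that the unit vector bases of $l_p$ are perfectly homogeneous (hence lower semi-homogeneous) for condition (2), and use subsymmetry (subsequence equivalence) of the unit vector bases to reduce condition (1) to the classical non-inclusions $c_0 \not\subseteq l_p$ and $l_q \not\subseteq l_p$ for $p<q$. The paper's proof is only a one-sentence sketch mentioning exactly these ingredients, so your write-up is simply a more explicit version of the same argument.
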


On the other hand, Tsirelson's space $T$ and its dual $T^\ast$, which is actually the original space constructed by Tsirelson (see \cite{C-S} and \cite{T}) serve to solve a well-known question in Banach space theory that whether there is a Banach space contains no isomorphic copies of $c_0$ and $l_p$ for $p \geq 1$. The natural analogous question, asked by Ding, that whether there is a equivalence relation of the form $E(X, (x_n))$ witnessing that  neither, for some $p \geq 1$, $\mathbb{R}^\mathbb{N}/l_p \leq E(X,(x_n))$ nor $\mathbb{R}^\mathbb{N}/c_0 \leq E(X,(x_n))$. In this section we only solve the case of $\mathbb{R}^\mathbb{N}/l_p$ for $p>1$ and $\mathbb{R}^\mathbb{N}/c_0$. This partly prove Theorem 1.3. For the case $l_1$, Please see the last section of this paper. The following proposition is essentially due to Casazza, Johnson and Tzafriri (see Lemma 4 in \cite{C-J-T} and Corollary 2.2 in \cite{C-S}).

\begin{proposition}[\textup{Casazza, Johnson and Tzafriri,} \cite{C-J-T}]
The natural basis $\{t_n\}$ in Tsirelson space $T$ is lower semi-homogeneous.
\end{proposition}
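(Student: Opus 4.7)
The plan is to deduce lower semi-homogeneity of $\{t_n\}$ by combining two structural properties of Tsirelson's space $T$, both provable by a common technique: induction on the approximating norms $\|\cdot\|_m$ used in the inductive definition of $\|\cdot\|_T$.

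First, I would invoke (or reprove) the block-basis dominance theorem of Casazza--Johnson--Tzafriri: for any normalized block basis $\{u_j\}$ of $\{t_n\}$, writing $p_j=\min \operatorname{supp}(u_j)$, there is a universal constant $K\geq 1$ with
\[
\Bigl\|\sum_j c_j u_j\Bigr\|_T \;\geq\; K^{-1} \Bigl\|\sum_j c_j t_{p_j}\Bigr\|_T
\]
for every scalar sequence. The inductive step lifts any admissible decomposition $F_1<\cdots<F_k$ (with $k\leq \min F_1$) witnessing the value of the right-hand side at level $m$ to the decomposition $E_i = \bigcup_{p_j\in F_i}\operatorname{supp}(u_j)$ on the block-support side. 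Since the $E_i$ are still increasing intervals with $\min E_1=\min F_1$, they form an admissible family at the same level $k$, and the inductive hypothesis controls each piece up to the constant $K$.

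Second, I would establish the 1-spreading property of $\{t_n\}$: whenever $k_1<k_2<\cdots$ satisfies $k_j\geq j$ for all $j$, one has
\[
\Bigl\|\sum_j c_j t_{k_j}\Bigr\|_T \;\geq\; \Bigl\|\sum_j c_j t_j\Bigr\|_T .
\]
Again by induction on $m$: any admissible $E_1<\cdots<E_\ell$ with $\ell\leq \min E_1$ chosen for the lower sequence is transported, via the map $j\mapsto k_j$, to $E_i'=\{k_j : j\in E_i\}$. Admissibility persists since $\min E_1' = k_{\min E_1}\geq \min E_1 \geq \ell$, and the inductive hypothesis applied on each $E_i'$ finishes the step.

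Finally I would combine the two. For any normalized block basis $\{u_j\}$ of $\{t_n\}$, the sequence $\{p_j\}$ is strictly increasing in $\mathbb{N}$, so $p_j\geq j$ automatically; hence chaining the two inequalities yields
\[
\Bigl\|\sum_j c_j u_j\Bigr\|_T \;\geq\; K^{-1}\Bigl\|\sum_j c_j t_{p_j}\Bigr\|_T \;\geq\; K^{-1}\Bigl\|\sum_j c_j t_j\Bigr\|_T,
\]
which is precisely the definition of lower semi-homogeneity. The hard part will be the block-basis dominance step: it is the only place where one must engage directly with the combinatorics of admissible families in the Tsirelson construction, and it is also where the multiplicative constant is unavoidably introduced, because of the mismatch between the support of a block vector $u_j$ and the single coordinate $p_j$. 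The spreading step and the final assembly are, by contrast, essentially bookkeeping once the inductive framework is in place.
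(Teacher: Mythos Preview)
The paper does not supply its own proof of this proposition; it simply cites Lemma~4 of Casazza--Johnson--Tzafriri and Corollary~2.2 of Casazza--Shura. Your outline is correct and is precisely the argument found in those references: the block-basis dominance inequality plus the right-spreading property of $\{t_n\}$, both proved by induction on the approximating norms $\|\cdot\|_m$, combine to give lower semi-homogeneity. Note that the paper itself later records a sharper, two-sided form of your first inequality as Proposition~5.2 (with explicit constants $1/3$ and $18$, and with $k_j$ any index in the $j$-th block rather than just $\min\operatorname{supp}(u_j)$), so in the logic of the paper the result really is taken as an imported black box rather than reproved.
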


\begin{corollary}
For any $p> 1$, Neither $\mathbb{R}^\mathbb{N}/l_p$ nor $\mathbb{R}^\mathbb{N}/c_0$ Borel reducible to $E(T,(t_n))$.
\end{corollary}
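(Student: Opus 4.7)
The plan is to apply Theorem 1.2 with source space $X \in \{l_p, c_0\}$ equipped with its unit vector basis $\{e_n\}$ and target space $Y = T$ with basis $\{t_n\}$. Condition (2) of that theorem is precisely Proposition 3.13 recalled just above. So the only work is to verify condition (1): for every subsequence $\{e_{b_n}\}$ of $\{e_n\}$, $\{e_{b_n}\}$ fails to dominate $\{t_n\}$. Since $\{e_n\}$ is symmetric in both $l_p$ and $c_0$, every subsequence is equivalent to $\{e_n\}$ itself, so it suffices to show that $\{e_n\}$ does not dominate $\{t_n\}$.

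I would argue by contradiction. Suppose $\{e_n\} \gg \{t_n\}$. Then the formula $\phi(\sum a_n e_n) = \sum a_n t_n$ defines a well-defined linear operator $\phi\colon X \to T$, and a standard closed-graph argument (using continuity of the coordinate functionals for both bases) shows that $\phi$ is bounded. The contradiction comes from the usual admissibility lower bound in $T$: applying the recursion for $\|\cdot\|$ with the singleton blocks $E_j = \{k_j\}$, for every $n$ and every choice $n \leq k_1 < k_2 < \cdots < k_n$ one gets
\[
\|t_{k_1} + \cdots + t_{k_n}\|_T \;\geq\; n/2.
\]
In the $c_0$ case, $v_n = e_{k_1} + \cdots + e_{k_n}$ has $\|v_n\|_{c_0} = 1$ but $\|\phi(v_n)\|_T \geq n/2$, forcing $\|\phi\|$ to be unbounded. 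In the $l_p$ case with $p > 1$, the vector $w_n = n^{-1/p}(e_{k_1} + \cdots + e_{k_n})$ satisfies $\|w_n\|_{l_p} = 1$ while $\|\phi(w_n)\|_T \geq n^{1-1/p}/2 \to \infty$, again contradicting the boundedness of $\phi$. Hence condition (1) holds and Theorem 1.2 delivers the non-reducibility.

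The only step that is not purely cosmetic is the passage from coefficient-set inclusion to boundedness of the induced operator via the closed graph theorem; everything else reduces to the quantitative lower bound on $\|t_{k_1} + \cdots + t_{k_n}\|_T$ that is built into the very definition of $T$. The reason the case $p = 1$ is not covered by this scheme — and must be treated separately, as the paper indicates — is that the scaling factor $n^{1-1/p}$ collapses to $1$ there, so no blow-up is produced and a more delicate argument is required.
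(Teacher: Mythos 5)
Your proof is correct and takes essentially the same route as the paper: apply Theorem~1.2, verify condition~(2) by Proposition~3.13, use the symmetry of $\{e_n\}$ to reduce condition~(1) to the single statement that $\{e_n\}$ does not dominate $\{t_n\}$, and then establish that nondomination. The paper does the last step by exhibiting the concrete harmonic witness $(1/(n+1))$, whose coefficient series converges in $l_p$ ($p>1$) and in $c_0$ but diverges in $T$, while you repackage the same admissibility lower bound $\|t_{k_1}+\cdots+t_{k_n}\|_T\geq n/2$ into an operator-unboundedness argument via the closed graph theorem; both rest on the same structural fact about $T$, so this is a cosmetic variation rather than a different method.
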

\begin{proof}
By the proposition above, we know that the sequence of unit vectors $\{t_n\}$ is lower semi-homogeneous in $T$. Thus as the unit vector basis $\{e_n\}$ in both $l_p$ and $c_0$ are subsymmetric (actually symmetric), we just need to show that $\{e_{n}\}$ does not dominate the $\{t_n\}$. It is easily to see that the sequence $(\frac{1}{n})_{n=1}^{\infty}$ witnesses that $\sum\limits_{n=0}^{\infty}\frac{1}{n+1} t_n$ diverges in $T$, while $\sum\limits_{n=0}^{\infty}\frac{1}{n+1}e_n$ converging in both $l_p$ and $c_0$.
\end{proof}

Now, we give some corollaries concerning Schauder equivalence relations generated by Lorentz sequence spaces and Orlicz sequence spaces. It is easy to see that $E(d(w,p),(e_n))$ is just the equivalence relation $\mathbb{R}^\mathbb{N}/d(w,p)$ and $E(h_M,(e_n))$ is $\mathbb{R}^\mathbb{N}/h_M$, similarly.

Firstly, We know that $l_p$ is a proper subspace of $d(w,p)$ (see Proposition 4.e.3 in \cite{L-T}). Thus, by Theorem 1.1 and Theorem 1.2, we have the following corollary.

\begin{corollary}
For any $p \geq 1$, $\mathbb{R}^\mathbb{N}/l_p <_B \mathbb{R}^\mathbb{N}/d(w,p)$
\end{corollary}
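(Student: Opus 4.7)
The statement is a strict inequality in Borel reducibility, so my plan is to establish the two directions separately and then combine them. The reducibility $\mathbb{R}^\mathbb{N}/l_p \leq_B \mathbb{R}^\mathbb{N}/d(w,p)$ will follow from Theorem 1.1, while the non-reducibility $\mathbb{R}^\mathbb{N}/d(w,p) \nleq_B \mathbb{R}^\mathbb{N}/l_p$ will follow from Theorem 1.2.

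For the reducibility half I would invoke the second clause of Theorem 1.1. By the cited Proposition 4.e.3 in \cite{L-T}, $l_p$ is (isomorphic to) a closed subspace of $d(w,p)$, and its standard unit vector basis $\{e_n\}$ is normalized and symmetric, hence subsymmetric. Theorem 1.1 then immediately yields $E(l_p,(e_n))\leq_B E(d(w,p),(e_n))$, i.e. $\mathbb{R}^\mathbb{N}/l_p \leq_B \mathbb{R}^\mathbb{N}/d(w,p)$. This step is essentially a look-up.

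For the non-reducibility half I would apply Theorem 1.2 with $X=d(w,p)$, $Y=l_p$, each equipped with its unit vector basis $\{e_n\}$. Condition (2) is a preliminaries-level fact: $\{e_n\}\subset l_p$ is perfectly homogeneous, and hence lower semi-homogeneous. Condition (1) requires showing that for every subsequence $\{b_n\}$ the basic sequence $\{e_{b_n}\}$ in $d(w,p)$ fails to dominate $\{e_n\}\subset l_p$. The strategy is to pick any $(a_n)\in d(w,p)\setminus l_p$, which exists because the inclusion $l_p\subsetneq d(w,p)$ is proper. Since $\{e_n\}$ is a \emph{symmetric} basis of $d(w,p)$ (immediate from the permutation-invariant definition of the norm), and symmetric implies subsequence equivalent, the series $\sum_n a_n e_{b_n}$ converges in $d(w,p)$ for every subsequence $\{b_n\}$. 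At the same time $\sum_n a_n e_n$ fails to converge in $l_p$ by the choice of $(a_n)$. This single $(a_n)$ therefore witnesses, uniformly in $\{b_n\}$, that $\{e_{b_n}\}$ does not dominate $\{e_n\}$, verifying condition (1).

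Combining the two halves gives $\mathbb{R}^\mathbb{N}/l_p <_B \mathbb{R}^\mathbb{N}/d(w,p)$. There is no substantive obstacle here; the whole argument is structural once the machinery of Theorems 1.1 and 1.2 is in place. The only point at which I would pause is to confirm that the witness $(a_n)$ can be chosen so that the partial sums of $\sum_n a_n e_n$ genuinely converge in the $d(w,p)$-norm (not merely that $(a_n)\in d(w,p)$ as a set). This is standard: the unit vectors form a Schauder basis of $d(w,p)$, again by Proposition 4.e.3 in \cite{L-T}.
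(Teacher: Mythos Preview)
Your proposal is correct and follows essentially the same approach as the paper: the paper simply notes that $l_p$ is a proper subspace of $d(w,p)$ (citing Proposition~4.e.3 in \cite{L-T}) and then invokes Theorems~1.1 and~1.2. Your write-up supplies the details the paper leaves implicit, in particular the verification of condition~(1) of Theorem~1.2 via the symmetry (hence subsymmetry) of the unit vector basis of $d(w,p)$.
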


For Orlicz spaces, we just study $h_M$ as unit vectors $\{e_n\}$ form a symmetric basis of it. If $M$ satisfies $\Delta^{'}$ condition, $\{e_n\}$ then in $h_M$ is lower semi-homogeneous.

Consider $u_j=\sum\limits^{p_{j+1}}_{n=p_j+1}a_ne_n$ is a normalized block basis of $\{e_n\}$. We can easily check that $\sum\limits^{p_{j+1}}_{i=p_j+1}M(|a_i|)=1$. For any scalar sequence $\{b_n\}\in \mathbb{R}^{\mathbb{N}}$, if $\sum\limits_{j=0}^{\infty}b_ju_j$ converges, then $\sum\limits_{j=0}^{\infty}\sum\limits^{p_{j+1}}_{i=p_j+1}M(|a_ib_j|)<\infty$. As $M$ satisfies $\Delta^{'}$ condition,
$$\sum\limits_{j=0}^{\infty}\sum\limits^{p_{j+1}}_{i=p_j+1}M(|a_ib_j|) \geq \sum\limits_{j=0}^{\infty}cM(|b_j|)\sum\limits^{p_{j+1}}_{i=p_j+1}M(|a_i|)=c\sum\limits_{j=0}^{\infty}M(|b_j|).$$
It means that $\sum\limits_{j=0}^{\infty}b_je_j$ converges. Thus, $\{e_n\}$ in $h_M$ is lower semi-homogeneous.

Thus, For Orlicz spaces $h_M$ and $h_N$, Theorem 1.2 has following special form:

\begin{corollary}
$M$ and $N$ are Orlicz functions. If two following conditions hold:

{\rm (1)}\quad $h_M\nsubseteq h_N$, and

{\rm (2)}\quad $N$ satisfies $\Delta^{'}$ conditions,

then $\mathbb{R}^\mathbb{N}/h_M \nleq_B \mathbb{R}^\mathbb{N}/h_N$.

\end{corollary}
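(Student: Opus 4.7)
The plan is to derive this corollary as a direct specialization of Theorem 1.2, taking $X = h_M$ and $Y = h_N$ with the (normalized) unit vector basis $\{e_n\}$ in each space. So I need only verify the two hypotheses of Theorem 1.2 in this setting.

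First I would check condition (2): the unit vector basis $\{e_n\}$ in $h_N$ is lower semi-homogeneous. This is exactly the calculation done in the paragraph immediately preceding the statement (applied to $N$ in place of $M$). Namely, if $\{u_j\}$ is a normalized block basis of $\{e_n\}$ in $h_N$ with $u_j = \sum_{i=p_j+1}^{p_{j+1}} a_i e_i$, then $\sum_{i=p_j+1}^{p_{j+1}} N(|a_i|) = 1$, and the $\Delta'$-condition on $N$ yields
$$\sum_j \sum_{i=p_j+1}^{p_{j+1}} N(|a_i b_j|) \ge c \sum_j N(|b_j|) \sum_{i=p_j+1}^{p_{j+1}} N(|a_i|) = c \sum_j N(|b_j|),$$
so convergence of $\sum_j b_j u_j$ forces convergence of $\sum_j b_j e_j$; hence $\{u_j\}$ dominates $\{e_j\}$.

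Next I would check condition (1): for every subsequence $\{e_{b_n}\}$ of $\{e_n\}$ in $h_M$, the sequence $\{e_{b_n}\}$ does not dominate $\{e_n\}$ in $h_N$. The key point is that $\{e_n\}$ is a symmetric basis of $h_M$ (Proposition 4.a.2 of \cite{L-T}), so $\text{coef}(h_M,(e_{b_n})) = \text{coef}(h_M,(e_n))$ for every subsequence. Consequently, the statement ``$\{e_{b_n}\}$ dominates $\{e_n\}$ in $h_N$'' reduces to the inclusion $\text{coef}(h_M,(e_n)) \subseteq \text{coef}(h_N,(e_n))$, i.e.\ $h_M \subseteq h_N$ (viewing both spaces as subsets of $\mathbb{R}^\mathbb{N}$ via coefficients), which is precisely what hypothesis (1) of the corollary rules out.

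With both conditions verified, Theorem 1.2 delivers $E(h_M,(e_n)) \nleq_B E(h_N,(e_n))$, which is $\mathbb{R}^\mathbb{N}/h_M \nleq_B \mathbb{R}^\mathbb{N}/h_N$. There is no genuine obstacle here: everything technical has already been absorbed into Theorem 1.2 and the $\Delta'$-computation that precedes the corollary; the only thing this proof contributes is the observation that symmetry of $\{e_n\}$ collapses the ``all subsequences'' quantifier in Theorem 1.2(1) down to the single clean inclusion $h_M \subseteq h_N$.
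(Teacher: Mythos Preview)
Your proposal is correct and matches the paper's intended argument exactly: the paper states this corollary as the direct specialization of Theorem 1.2 to $X=h_M$, $Y=h_N$ with the unit vector basis, relying on the preceding $\Delta'$-computation for lower semi-homogeneity and on the symmetry of $\{e_n\}$ in $h_M$ to reduce the subsequence condition to the single inclusion $h_M\subseteq h_N$. There is nothing to add.
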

We would like to provide more application of Theorem 1.1 and Theorem 1.2 in the next section to study the boundaries of the Schauder equivalence relations.

\section{Boundaries of Schauder equivalence relations}
In the last two sections, we will study some structural properties of the class of the Schauder equivalence relations. We denote $\mathcal{A}$ for the class, of all equivalence relations of the form $E(X,(x_n))$ and $\mathcal{A}_U$ the subset of $\mathcal{A}$ such that $\{x_n\}$ is unconditional. Firstly, we prove that $\mathcal{A}$ and $\mathcal{A}_U$ have boundaries to some extent.

For any Banach space $X$ with a normalized basic sequence $\{x_n\}$, it is easy to see that $l_1\subset {\rm coef}(X, (x_n)) \subset c_0$. It seems that $\mathbb{R}^\mathbb{N}/l_1$ and $\mathbb{R}^\mathbb{N}/c_0$ are special ones in $\mathcal{A}$. Indeed they are. We claim that $\mathbb{R}^\mathbb{N}/c_0$ and $\mathbb{R}^\mathbb{N}/l_1$ are minimal incomparable ones in $\mathcal{A}$ in the order of $\leq_B$ by proving the following proposition.

\begin{remark}
For the part of $l_1$. we can use Theorem 2.1. As we can check directly that every $E(X, (x_n))$ is turbulent, then $\mathbb{R}^\mathbb{N}/l_1$ is minimal in $\mathcal{A}$.  To be self-contained in this paper, we can use Theorem 1.2 to prove the minimality of $\mathbb{R}^\mathbb{N}/l_1$.
\end{remark}

\begin{proposition}
If $c_0$ (\emph{resp}. $l_1$) can not be embedded into $X$ with a basis $\{x_n\}$, then $E(X, (x_n))\nleq_B \mathbb{R}^\mathbb{N}/c_0$ (\emph{resp}. $\mathbb{R}^\mathbb{N}/l_1$).
\end{proposition}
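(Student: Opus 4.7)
The plan is to argue by contrapositive in both cases, using Theorem 1.2 directly for the $\ell_1$ part and a small modification of its proof for the $c_0$ part.

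For the $\ell_1$ part, suppose $E(X,(x_n)) \leq_B \mathbb{R}^{\mathbb{N}}/\ell_1$. The unit vector basis $\{e_n\}$ of $\ell_1$ is perfectly homogeneous and in particular lower semi-homogeneous, so condition (2) of Theorem 1.2 is satisfied with $Y=\ell_1$ and $\{y_n\}=\{e_n\}$. The contrapositive therefore forces condition (1) to fail: some subsequence $\{x_{b_n}\}$ of $\{x_n\}$ dominates $\{e_n\}$ in $\ell_1$, i.e.\ the convergence of $\sum a_n x_{b_n}$ forces $\sum|a_n|<\infty$. Since we may assume $\{x_n\}$ to be normalized at the outset, the converse domination is automatic: $\sum|a_n|<\infty$ gives norm-convergence of $\sum a_n x_{b_n}$ by the triangle inequality. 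Hence $\{x_{b_n}\}$ is equivalent to the $\ell_1$-basis, and its closed linear span is a subspace of $X$ isomorphic to $\ell_1$, contradicting the hypothesis that $\ell_1$ does not embed in $X$. This is essentially the $\ell_1$-analogue of James's theorem used earlier in Section~3.

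For the $c_0$ part, Theorem 1.2 does not apply as stated: every normalized basic sequence $\{x_{b_n}\}$ already dominates $\{e_n\}$ of $c_0$, because the convergence of $\sum a_n x_{b_n}$ forces $|a_n|=\|a_n x_{b_n}\|\to 0$. I would instead rerun the proof of Theorem 1.2 under the weaker hypothesis that \emph{no} subsequence of $\{x_n\}$ is equivalent to $\{e_n\}$ of $c_0$, which is implied by $c_0\not\hookrightarrow X$ (otherwise the closed span of such a subsequence would be an isomorphic copy of $c_0$ inside $X$). The failure of equivalence, combined with the automatic domination above, yields a sequence $\delta_i\to 0$ for which $\sum\delta_i x_{b_i}$ diverges. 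Taking $u(j)$ to be a rounding of $\delta_j$ inside $\mathbb{Z}(\vec\epsilon)\cap[-1,1]^{\mathbb{N}}$ and $\hat u=0$, one has $u-\hat u\notin \mathrm{coef}(X,(x_{b_n}))$. On the other hand, the quantitative Claims 1 and 2 of the proof of Theorem 1.2, which in $c_0$ take the sup-norm form $\sup_{i\geq l_{j+1}}|\theta(u)(i)-\theta(\hat u)(i)|<2^{-j}$ together with analogous head bounds, combined with $\delta_j\to 0$ and the modular structure of $\theta'$, force $\theta'(u)-\theta'(\hat u)\in c_0$; this contradicts the reducibility of $\theta'$.

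The main obstacle is in the $c_0$ case, where the flavor of the contradiction differs from Theorem 1.2: instead of producing a divergent series in $Y$ that the reduction would send to a convergent one, one must show that the tail $\sup_{i>N}|\theta'(u)(i)-\theta'(\hat u)(i)|$ actually vanishes even though the perturbations $\delta_j$ themselves go to zero. Making this quantitative requires pinning down the dependence of the modular blocks $f_j(\delta_j)-f_j(0)$ on $\delta_j$ via the $2^{-j}$-control furnished by Claims 1 and 2, and almost certainly a further thinning of the subsequence $\{b_j\}$ to secure a uniform modulus of continuity across blocks.
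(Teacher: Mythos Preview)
Your $\ell_1$ argument is correct and matches the paper's: both apply Theorem~1.2 via the lower semi-homogeneity of the $\ell_1$ basis, noting that failure of condition~(1) would produce a subsequence equivalent to $\{e_n\}$, hence an embedded copy of $\ell_1$.

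Your $c_0$ argument, however, has a genuine gap. You propose to show that $\delta_j\to 0$ forces $\theta'(u)-\theta'(\hat u)\in c_0$ via Claims~1 and~2, but those claims only control how $\theta$ changes when one modifies coordinates \emph{outside} a given block; they say nothing about how the individual block map $a\mapsto f_j(a)$ depends on the size of $a$. Worse, the conclusion you are aiming for is actually false for the specific $\delta$ you chose: since $\delta\notin\mathrm{coef}(X,(x_{b_n}))$ and $\theta'$ is a reduction, one has $\theta'(\delta)-\theta'(0)\notin c_0$ by definition, i.e.\ $\|f_j(\delta_j)-f_j(0)\|_{c_0}\not\to 0$. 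No thinning of $\{b_j\}$ will manufacture a uniform modulus of continuity that makes this vanish; indeed, if your implication held for every null sequence $(\delta_j)$, it would force $c_0\subset\mathrm{coef}(X,(x_{b_n}))$ and hence an embedding of $c_0$.

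The paper's idea is different and avoids continuity estimates altogether. Writing $a_j=\|f_j(\delta_j)-f_j(0)\|_{c_0}$, the divergence of $\sum\delta_j x_{b_j}$ together with the reduction gives $a_j\not\to 0$, so $a_{n_k}>\epsilon_0$ along some subsequence. Now use the reduction again, in the \emph{converging} direction: since $\delta_j\to 0$, one can pick a further subsequence $(p_i)\subset(n_k)$ with $\sum_i|\delta_{p_i}|<\infty$, and the sequence $\delta'$ equal to $\delta_{p_i}$ at position $p_i$ and $0$ elsewhere lies in $\ell_1\subset\mathrm{coef}(X,(x_{b_n}))$. The reduction then forces $\theta'(\delta')-\theta'(0)\in c_0$, i.e.\ $a_{p_i}\to 0$, contradicting $a_{p_i}>\epsilon_0$. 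The contradiction comes from playing the reduction in both directions on two different inputs, not from any uniform estimate on the block maps $f_j$.
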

\begin{proof}
Firstly, we can assume that $\{x_n\}$ is normalized. Then, we use Theorem 1.2 to prove the minimality of $\mathbb{R}^\mathbb{N}/l_1$. $l_1$ with $\{e_n\}$ is perfectly homogeneous, thus lower semi-homogeneous. Given ${\rm coef}(X,(x_n))$, for any subsequences $\{x_{k_n}\}$ of $\{x_n\}$, as $l_1$ can not being embedded into $X$, $\{e_n\}$ can not be equivalent to $\{x_{k_n}\}$. Thus, $l_1\subsetneqq {\rm coef}(X,(x_{k_n}))$. Now, Theorem 1.2 applies.

The case for $c_0$ is a little different. Given ${\rm coef}(X,(x_n))$, we will use the first three claims of the proof of Theorem 1.2 instead of itself. Then, similar to the case of $l_1$, we have for a subsequence $\{x_{b_n}\}$ of $\{x_n\}$, $ {\rm coef}(X,(x_{k_n})) \subsetneqq c_0$. It means that there is a sequence $(\delta_i)$ such that $\epsilon_{b_i}<|\delta_i|<1$ with $|\delta_i|\rightarrow 0$ but $\sum \delta_i x_{b_i}$ diverging. In the same way in Theorem 1.2, we thus can obtain a ``modular'' reduction $\theta'$ witnesses that $E(X,(x_{k_n}))\leq_A c_0$ with the form $\theta'(u)=f_0(u(0))^{\smallfrown}f_1(u(1))^{\smallfrown}f_2(u(2))...$.

As $\sum \delta_i x_{b_i}$ diverging, $a_n=\|f_n(\delta_n)-f_n(0)\|_{c_0}\nrightarrow 0$ holds. In fact we can assume that there is a $\epsilon_0$ such that $a_n=\|f_n(\delta_n)-f_n(0)\|_{c_0}>\epsilon_0$ as we can always choose a subsequence $\{a_{n_k}\}$ of $\{a_n\}$ such that $a_{n_k}>\epsilon_0$ as $a_n\nrightarrow 0$ for some $\epsilon_0$ and use $\{a_{n_k}\}$ to replace $\{a_n\}$. In this case, there is no subsequence $\{a_{n_k}\}$ of $\{a_n\}$ such that $a_{n_k}\rightarrow 0$. However, As $|\delta_i|\rightarrow 0$, we can choose a subsequence $\{\delta_{p_i}\}$ of $\{\delta_{i}\}$  such that $\{\delta_{p_i}\}\in l_1$ forces that $\sum \delta_{p_i} x_{b_{p_i}}$ converges. Define $\delta'$ as follows.
\[\delta'_n=\left\{\begin{array}{ll}
\delta_{p_k}&\textrm{if $n=p_k$}\\
0&\textrm{otherwise}
\end{array}\right.\].

In this case, as $\theta'$ is a reduction, $a_{p_n}=\|f_{p_n}(\delta_{p_n})-f_{p_n}(0)\|_{c_0}\rightarrow 0$.
A contradiction.
\end{proof}

This proposition, together with Theorem 1.1 and the result of same type about $l_1$. We can show the minimality of $\mathbb{R}^\mathbb{N}/c_0$ and $\mathbb{R}^\mathbb{N}/l_1$.

\begin{theorem}
If $E(X, (x_n))\leq_B \mathbb{R}^\mathbb{N}/c_0$ ( \emph{resp}. $\mathbb{R}^\mathbb{N}/l_1$ ) , then $E(X, (x_n))\thicksim_B \mathbb{R}^\mathbb{N}/c_0$ (\emph{resp}. $\mathbb{R}^\mathbb{N}/l_1$ ).
\end{theorem}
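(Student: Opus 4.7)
The plan is to combine Proposition 4.2 (the preceding non-reducibility statement) with the second clause of Theorem 1.1 in a two-sided sandwich. First I would take the contrapositive of Proposition 4.2: if $E(X,(x_n)) \leq_B \mathbb{R}^\mathbb{N}/c_0$, then $c_0$ must embed as a closed subspace of $X$; symmetrically, if $E(X,(x_n)) \leq_B \mathbb{R}^\mathbb{N}/l_1$, then $l_1$ must embed into $X$. This step is immediate from the proposition just proved.

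Next I would invoke the "Furthermore" clause of Theorem 1.1 with $Y := X$ and basis $\{y_n\} := \{x_n\}$, taking the subspace to be the embedded copy of $c_0$ (resp.\ $l_1$) together with its unit vector basis $\{e_n\}$. Since $\{e_n\}$ is normalized and symmetric in both $c_0$ and $l_1$, it is in particular subsymmetric, so Theorem 1.1 produces the reduction $E(c_0,(e_n)) \leq_B E(X,(x_n))$, that is $\mathbb{R}^\mathbb{N}/c_0 \leq_B E(X,(x_n))$ (and analogously $\mathbb{R}^\mathbb{N}/l_1 \leq_B E(X,(x_n))$ in the other case). Combining this with the standing hypothesis $E(X,(x_n)) \leq_B \mathbb{R}^\mathbb{N}/c_0$ (resp.\ $\mathbb{R}^\mathbb{N}/l_1$) yields $E(X,(x_n)) \thicksim_B \mathbb{R}^\mathbb{N}/c_0$ (resp.\ $\mathbb{R}^\mathbb{N}/l_1$), as required.

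There is essentially no obstacle to negotiate in this theorem itself: all the substantive work has already been carried out in Proposition 4.2, where the absence of $c_0$ or $l_1$ inside $X$ is turned into non-reducibility via the "modular" machinery of Theorem 1.2, and in Theorem 1.1, where an isometric embedding is upgraded to a Borel reduction using the subsymmetry of $\{e_n\}$. The proof of Theorem 4.3 is purely the packaging step that sandwiches these two ingredients, so I would simply write it out in a few lines invoking Proposition 4.2 and Theorem 1.1 by name.
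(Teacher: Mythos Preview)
Your proposal is correct and matches the paper's own proof essentially line for line: the paper also takes the contrapositive of Proposition~4.2 to force an embedded copy of $c_0$ (resp.\ $l_1$) inside $X$, and then invokes Theorem~1.1 (together with the James-type remark for $l_1$) to obtain the reverse reduction. One small wording slip: you say ``isometric embedding,'' but only an isomorphic embedding (closed subspace) is available and needed.
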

\begin{proof}
We just show the case of $c_0$, as the case of $l_1$ shares the same proof. If $E(X, (x_n))\leq_B \mathbb{R}^\mathbb{N}/c_0$, by the proposition above, we must have $c_0$ can be embedded in $X$. By Theorem 1.1, we have that $\mathbb{R}^\mathbb{N}/c_0\leq_B E(X, (x_n))$. Thus $E(X, (x_n))\thicksim_B \mathbb{R}^\mathbb{N}/c_0$.
\end{proof}

For the upper boundaries of $\mathcal{A}$ and $\mathcal{A}_U$. we appeal to the universal separable Banach spaces $U_1$ and $U_2$ constructed by Pelczynski\cite{P} (see also \cite{S}). $U_1$ has an unconditional basis $\{u_i\}$ such that every unconditional basic sequence (in an arbitrary separable Banach space) is equivalent to a subsequence of $\{u_i\}$. For $U_2$, similarly, has a Schauder basis $\{v_n\}$ such that every basic sequence is equivalent to one of its subsequence. Thus, we can easily see the following theorem.

\begin{theorem}
For any  $E(X, (x_n))$ in $\mathcal{A}_U$, $E(X, (x_n))\leq_B E(U_1, (u_n))$ and for any $E(X, (x_n))$ in $\mathcal{A}$, $E(X, (x_n))\leq_B E(U_2, (v_n))$.
\end{theorem}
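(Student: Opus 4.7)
The plan is to combine the universality property of the Pel\-czynski spaces $U_1$ and $U_2$ with the subsequence reducibility already established in Lemma 3.1. Given any $E(X,(x_n))\in\mathcal{A}_U$, the sequence $\{x_n\}$ is an unconditional basic sequence, so by the defining property of $U_1$ it is equivalent to some subsequence $\{u_{k_n}\}$ of $\{u_n\}$. Likewise, given any $E(X,(x_n))\in\mathcal{A}$, the basic sequence $\{x_n\}$ is equivalent to a subsequence $\{v_{k_n}\}$ of $\{v_n\}$ by the corresponding property of $U_2$. The two cases are parallel, so I treat them uniformly below and write $\{u_n\}$ for either basis.

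The key observation is that equivalence of basic sequences is exactly equality of coefficient groups: if $\{x_n\}$ is equivalent to $\{u_{k_n}\}$, then ${\rm coef}(X,(x_n))={\rm coef}([u_{k_n}],(u_{k_n}))$ as subgroups of $\mathbb{R}^\mathbb{N}$. Since the Schauder equivalence relation is defined purely in terms of this coefficient group via $(a,b)\in E(X,(x_n))\iff a-b\in{\rm coef}(X,(x_n))$, this gives the literal identity
\[
E(X,(x_n)) \;=\; E([u_{k_n}],(u_{k_n}))
\]
as equivalence relations on $\mathbb{R}^\mathbb{N}$, witnessed by the identity map as a (trivially Borel) reduction.

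Next, $\{u_{k_n}\}$ is a subsequence of $\{u_n\}$, so Lemma 3.1 gives a Borel reduction
\[
E([u_{k_n}],(u_{k_n})) \;\leq_B\; E(U_1,(u_n)),
\]
realized explicitly by the coordinate-pad map sending $c\in\mathbb{R}^\mathbb{N}$ to the sequence whose $k_n$-th coordinate is $c_n$ and whose other coordinates are $0$. Composing this with the identity from the previous paragraph yields $E(X,(x_n))\leq_B E(U_1,(u_n))$, which is what we wanted; the same argument with $\{v_n\}$ in place of $\{u_n\}$ handles $\mathcal{A}$.

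There is no real obstacle here beyond checking that the notion of ``equivalent basic sequences'' used in the Pelczynski universality theorem coincides with equality of coefficient groups in the Borel/Polishable sense used in the definition of $E(X,(x_n))$; this is immediate from the paper's definition of the relation $\gg$ and the remark that ${\rm coef}(X,(x_n))$ depends only on the equivalence class of the basis. So the entire proof amounts to citing Pelczynski's theorem, applying Lemma 3.1, and composing the two reductions.
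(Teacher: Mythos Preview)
Your proof is correct and follows exactly the approach the paper intends: the paper states the universality property of $U_1$ and $U_2$ and then says ``Thus, we can easily see the following theorem,'' leaving the argument implicit. Your explicit unpacking---equivalence of $\{x_n\}$ with a subsequence $\{u_{k_n}\}$ gives equality of coefficient groups, hence $E(X,(x_n))=E([u_{k_n}],(u_{k_n}))$, and then Lemma~3.1 supplies $E([u_{k_n}],(u_{k_n}))\leq_B E(U_1,(u_n))$---is precisely the intended route.
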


\section{Bases of Schauder equivalence relations}
In this section, Farah's conclusion of Tsirelson ideals are used to prove all kinds of non-reducibility concerning the Tsirelson space. We mainly prove Proposition 1.3 and Theorem 1.4. Similar to the argument of Farah, Theorem 1.4 leads Corollary 1.5 naturally. For any unconditional normalized basic sequence $\{x_n\}$ in $X$, we denote $I_{(x_n),f}^X$  for the ideal  $\emph{I}=\{A \in P(\mathbb{N}) \colon \sum\limits_{n\in A} f(n)x_n\ \textrm{converges}\}$. For this kind of ideals, we can define following submeasure:

\[\varphi(A)=\left\{\begin{array}{ll}
\||\sum\limits_{n\in A}f(n)x_n\||&\textrm{if $\sum\limits_{n\in A}f(n)x_n$ converges}\\
\sup\limits_m\||\sum\limits_{n\in A\bigcap [0,m)}f(n)x_n\||&\textrm{otherwise},
\end{array}\right.\]

where $\||\cdot\||$ is the norm which is equivalent to the original norm $\|\cdot\|$ on $[x_n]^{\infty}_{n=0}$ but makes $\{x_n\}$ monotone. It is easy to see that $\varphi(A)$ is a LSC submeasure and $I_{(x_n),f}^X=Exh_{\varphi}$. Thus, by Theorem 2.8 and Remark 2.9 following it, $I_{(x_n),f}^X$ is turbulent if and only if $f(n)\rightarrow 0$. The following lemma is fundamental.

\begin{lemma}
For $X$ is a Banach space having no subspaces isomorphic to $c_0$. $Y$ is also a Banach space. Assume $E(X, (x_n))\leq_B E(Y, (y_n))$ with $\{x_n\}$ and $\{y_n\}$ being unconditional and monotone, respectively, then there are functions $f,g \colon \mathbb{N} \rightarrow \mathbb{R}^+$ with $f(n),g(n)\rightarrow 0$, subsequence $\{x_{b_n}\}$ of $\{x_{n}\}$, and a normalized block basis $\{s_j\}$ of $\{y_n\}$ such that $I_{(x_{b_n}),f}^X= I_{(s_{n}),g}^Y$.
\end{lemma}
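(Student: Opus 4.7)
The plan is to import the Dougherty--Hjorth ``modular reduction'' machinery (Claims 1--3 from the proof of Theorem 1.2, which do not appeal to lower semi-homogeneity of $\{y_n\}$) to the given Borel reduction $\theta$, and to read off from the resulting block-basis structure the two ideals in $P(\mathbb{N})$ that are to be identified. Concretely, after running those claims I obtain integer sequences $b_0 < b_1 < \cdots$ and $l_0 < l_1 < \cdots$ together with a modified reduction $\theta'$ of the form $\theta'(u) = f_0(u(0))^\frown f_1(u(1))^\frown f_2(u(2))^\frown \cdots$ with $f_j(a)$ supported in the window $(l_j, l_{j+1}]$, and $\theta'$ still reduces the equivalence relation on the domain (coming from $\{x_{b_n}\}$) into $E(Y,(y_n))$ up to an error lying in ${\rm coef}(Y,(y_n))$.

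For the choice of $f$, I would invoke the hypothesis that $X$ contains no isomorphic copy of $c_0$. Since $\{x_{b_n}\}$ is an unconditional basic sequence in a Banach space without $c_0$, it is not equivalent to the unit vector basis of $c_0$, so by the Bessaga--Pelczynski characterization of $c_0$-subspaces there exists $f\colon \mathbb{N}\to\mathbb{R}^+$ with $f(n)\to 0$ for which $\sum_n f(n) x_{b_n}$ diverges in $X$. After rescaling so that $\epsilon_{b_i} < f(i) < 1$, set $\delta_i = f(i)$ and form the block vectors
$$S_j = \sum_{i=l_j+1}^{l_{j+1}} \bigl(f_j(\delta_j)(i) - f_j(0)(i)\bigr)\, y_i.$$
Since $\{y_n\}$ is monotone, after discarding finitely many degenerate indices each $S_j$ is nonzero; normalize $s_j = S_j/\|S_j\|$ and set $g(j) = \|S_j\|$. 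Then $\{s_j\}$ is a normalized block basis of $\{y_n\}$, and $g(n)\to 0$ holds because on the ``diagonal'' argument $u(j) = \delta_j$ the reduction property forces $\theta'(u) - \theta'(0) \in {\rm coef}(Y,(y_n))$, so $\sum_j S_j$ converges in $Y$ and its norms tend to $0$.

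Finally, the ideal identity is read off from the modular structure. For $A\subseteq \omega$, let $u_A$ be the sequence that equals $\delta_j$ on $A$ and $0$ elsewhere. By the disjoint-support structure of $\theta'$ and its reduction property, $\sum_{i\in A}\delta_i x_{b_i}$ converges in $X$ if and only if $\theta'(u_A)-\theta'(0)\in {\rm coef}(Y,(y_n))$, which in turn happens if and only if $\sum_{j\in A} S_j = \sum_{j\in A} g(j)\, s_j$ converges in $Y$; this is exactly the assertion $I^X_{(x_{b_n}),f} = I^Y_{(s_n),g}$. The main obstacle is adapting the Louveau--Velickovic/Dougherty--Hjorth Baire category construction of $\theta'$ so as not to invoke lower semi-homogeneity of $\{y_n\}$ at the final step; here the conclusion is phrased directly in terms of the block basis $\{s_j\}$ rather than $\{y_n\}$ itself, and the role of the ``no $c_0$ in $X$'' hypothesis is cleanly isolated to the production of an admissible sequence $f(n)\to 0$ with $\sum f(n) x_{b_n}$ divergent, which in turn forces both $f(n),g(n)\to 0$.
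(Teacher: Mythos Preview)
Your overall strategy matches the paper's: run Claims~1--3 of the Dougherty--Hjorth machinery to obtain the integers $b_j$, $l_j$ and a modular reduction $\theta'$, then use the hypothesis that $X$ contains no copy of $c_0$ to produce $f(n)\to 0$ with $\sum_n f(n)x_{b_n}$ divergent, build the blocks $S_j$ and read off the ideal identity. That part is fine and is essentially the paper's argument (the paper additionally normalizes so that $T_j(0)=\vec 0$ and takes $\delta_i=\epsilon_{b_i}$ with $f(i)$ an integer multiple of $\delta_i$, but these are bookkeeping differences).

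The genuine gap is your argument for $g(n)\to 0$. You write that for $u(j)=\delta_j=f(j)$ the reduction property forces $\theta'(u)-\theta'(0)\in{\rm coef}(Y,(y_n))$ and hence $\sum_j S_j$ converges. This is backwards: you chose $f$ precisely so that $\sum_j f(j)x_{b_j}$ \emph{diverges}, i.e.\ $u-0\notin{\rm coef}(X,(x_{b_n}))$, so the reduction gives $\theta'(u)-\theta'(0)\notin{\rm coef}(Y,(y_n))$ and $\sum_j S_j$ diverges. Nothing about $g(j)=\|S_j\|$ follows from this.

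The paper closes the gap differently: once the ideal identity $I^X_{(x_{b_n}),f}=I^Y_{(s_n),g}$ is established, the condition $f(n)\to 0$ makes $I^X_{(x_{b_n}),f}$ turbulent (via the submeasure description $I={\rm Exh}_\varphi$ and Remark~2.9), hence $I^Y_{(s_n),g}$ is turbulent as well, which forces $g(n)\to 0$. An equivalent elementary argument also works: if $g(n_k)\geq\varepsilon$ along some subsequence, pass to a further infinite $B\subseteq\{n_k\}$ thin enough that $\sum_{j\in B}f(j)x_{b_j}$ converges (possible since $f(j)\to 0$ and $\{x_{b_j}\}$ is unconditional), so $B\in I^X=I^Y$; but $\sum_{j\in B}g(j)s_j$ has terms bounded away from zero, contradicting $B\in I^Y$. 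Either way, you need to replace the erroneous ``diagonal'' step.
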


\begin{proof}
Using the proof of Theorem 1.2 (claim 1-3), After finding a subsequence $\{x_{b_n}\}$ of $\{x_n\}$, we need to construct the ``modular'' reduction. Now, we do not need the difference between the ${\rm coef}(X, (x_{b_n}))$ and ${\rm coef}(Y, (y_n))$. Thus we follow the original step of Dougherty and Hjorth \cite{D-H} by taking $\delta_i=\epsilon_{b_i}$ and $g(u)(b_i)= u(i)$. We thus can obtain a ``modular'' reduction $\theta'$ witnesses that $E(X,(x_{k_n}))\leq_A E(Y, (y_n))$ with the form $\theta'(u)=T_1^{\smallfrown}(u(1))^{\smallfrown}T_2(u(2))^{\smallfrown}T_3(u(3))...$.  we can assume that $\theta'(\vec{0})=\vec{0}$ by define another reduction $\theta''(a)=\theta'(a)-\theta'(0)$. In this case, for any $n$, $T_n(0)=\vec{0}$.

As $X$ does not contain $c_0$, there is a function  $f \colon \mathbb{N} \rightarrow \mathbb{R}^+$ such that $\sum f(n)x_{b_n}$ diveges but $f(n)\rightarrow 0$ (see \cite[Propostion 2.e.4]{L-T} and the remark following it). Furthermore, we can asumme that for each $n$, $f(n)=k_n\delta_n$ for some $k_n\in \mathbb{N}$.

For any such a function $f$, we can define $\phi$ from $2^\mathbb{N}$ to  $\mathbb{Z}(\vec{\delta})\cap[-1,1]^{\mathbb{N}}$ by $\phi(a)(i)=f(i)\cdot a(i)$. Thus by combining $\theta'$ and $\phi$, we can define a reduction $\varphi$ witness that $2^\mathbb{N}/I_{(x_{b_n}),f}^X \leq_A E(Y, (y_n))$  by:
$$\varphi(a)=T_1(f(1)a(1))^{\smallfrown} T_2(f(2)a(2))^{\smallfrown}T_3(f(3)a(3))...$$

As $T_n(0)=\vec{0}$ holds, we in fact have following formula:
$$\varphi(a)=a(1)\cdot T_1(f(1))^{\smallfrown} a(2)\cdot T_2(f(2))^{\smallfrown} a(3)\cdot T_3(f(3))...$$

Take the block basis $S_j=\sum\limits_{i=l_j+1}^{l_{j+1}}(T_j(f(j))(i))y_i$ and $s_j=S_j/\|S_j\|$. We define $g(j)=\|S_j\|$.
We can easily check following holds for any $a,b\in 2^\mathbb{N}$:

$a\triangle b\in I_{(x_{b_n}),f}^X $ iff $\sum\limits_{a(i)\neq b(i)} f(i)x_{b_i}$ converges iff $ \phi(a)-\phi(b)\in {\rm coef}(X, (x_{b_n}))$ iff $\theta'(\phi(a))-\theta'(\phi(b))\in {\rm coef}(Y, (y_{n}))$ iff $\sum\limits_{j=1}^{\infty}|a(j)-b(j)|\cdot\|S_j\|s_j$ converges iff $a\triangle b\in I_{(s_n),g}^Y $

Thus $I_{(x_{b_n}),f}^X= I_{(s_{n}),g}^Y$. As $f(n)\rightarrow 0$, $I_{(x_{b_n}),f}^X$ is turbulent. Then $g(n)\rightarrow 0$.
\end{proof}

The following lemma concerning Tsirelson space is due to Casazza, Johnson and Tzafriri.

\begin{proposition}
 Let $\{y_j\}$ in $T$ of the form $y_j=\sum\limits^{p_{j+1}}_{p_j+1}a_nt_n$, with $\{a_n\}$ scalars, is a normalized block basic sequence of $\{t_n\}$, Then for every choice of natural numbers $p_j<k_j\leq p_{j+1}$, and every sequence of scalars $\{b_n\}$, we have:
 $$\frac{1}{3}\|\sum\limits_j b_jt_{k_j}\|\leq \|\sum\limits_j b_jy_j\|\leq 18\|\sum\limits_j b_jt_{k_j}\|$$.
\end{proposition}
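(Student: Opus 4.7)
The plan is to establish the two inequalities separately by exploiting the recursive definition $\|x\|_{m+1}=\max\{\|x\|_m,\frac{1}{2}\max\sum_{j=1}^k\|E_j x\|_m\}$ of the Tsirelson norm and the admissibility condition $k\leq E_1<E_2<\cdots<E_k$. The key structural fact is that each $y_j$ and each $t_{k_j}$ has support inside the same block interval $(p_j,p_{j+1}]$, so any admissible cutting of $\mathbb{N}$ that respects one sequence can, after a controlled adjustment, be made to respect the other.

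For the lower bound $\tfrac{1}{3}\|\sum_j b_j t_{k_j}\|\leq\|\sum_j b_j y_j\|$, I would take an admissible family $E_1<E_2<\cdots<E_k$ that witnesses (up to $\varepsilon$) the norm of $\sum_j b_j t_{k_j}$ and transport it to an admissible family for $\sum_j b_j y_j$ by setting $F_i=\bigcup\{\,\mathrm{supp}(y_j):k_j\in E_i\,\}$. Because $\mathrm{supp}(y_j)\subseteq(p_j,p_{j+1}]$ and $k_j\in(p_j,p_{j+1}]$, the $F_i$ are still successive, but the admissibility condition $k\leq F_1$ may fail since $F_1$ can begin strictly below $\min E_1$. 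The remedy is the standard shift argument: discard the first block or split off a small piece, paying a multiplicative factor that (after optimizing) is at most $3$. On each $F_i$, the normalization $\|y_j\|=1$ together with the monotonicity of the basis converts $\|F_i(\sum_j b_j y_j)\|$ into a lower bound involving $|b_j|\,\|t_{k_j}\|$, which reassembles into $\tfrac{1}{3}\|\sum_j b_j t_{k_j}\|$.

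For the upper bound $\|\sum_j b_j y_j\|\leq 18\|\sum_j b_j t_{k_j}\|$, I would argue by induction on the level $m$ in the definition of $\|\cdot\|_m$. Given an admissible family $F_1<\cdots<F_m$ that almost attains $\|\sum_j b_j y_j\|$, classify each index $j$ according to whether $\mathrm{supp}(y_j)$ lies entirely inside some $F_i$ (interior) or is split across two or more $F_i$ (boundary). Interior contributions give a sum $\sum_{j\text{ int}}|b_j|$ of norms of normalized vectors that can be dominated by an admissible evaluation against $\sum_j b_j t_{k_j}$. Boundary terms can occur in at most two places per $y_j$, and by the standard trick of splitting the collection of boundary $F_i$'s into two admissible subfamilies and invoking the induction hypothesis, each such subfamily is controlled by $\|\sum_j b_j t_{k_j}\|$. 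Summing these contributions produces the constant $18$.

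The main obstacle is the upper bound, specifically the treatment of boundary $y_j$'s, since an admissible family for the block basis can cut through the support of a single $y_j$ in ways that no admissible family for $\{t_{k_j}\}$ can. Overcoming this requires the projection estimate implicit in Tsirelson's construction together with a careful splitting-and-regrouping of admissible families; the composition of the factor $\tfrac{1}{2}$ in the recursion with the shifting factor and with the splitting of boundary terms is what forces the seemingly loose constant $18$.
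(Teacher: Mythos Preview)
The paper does not give a proof of this proposition at all: it is stated as a quoted result attributed to Casazza, Johnson and Tzafriri (with references \cite{C-J-T} and \cite{C-S}), and is then used as a black box in the proofs of Proposition~1.3 and Theorem~1.4. So there is no ``paper's own proof'' to compare your proposal against.

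That said, your outline is in the spirit of the original argument in \cite{C-J-T} and \cite{C-S}: both inequalities are obtained by manipulating admissible families in the recursive definition of the Tsirelson norm, transporting a near-optimal family from one sequence to the other and paying a constant for the shift needed to restore admissibility, with an inductive splitting into ``interior'' and ``boundary'' blocks for the upper bound. As a sketch this is on the right track, though the bookkeeping that pins down the specific constants $3$ and $18$ is somewhat delicate and you would need to make the shift argument and the boundary-splitting precise (in the references this is done via an intermediate lemma controlling the norm in terms of admissible families that start at a prescribed level). If you intend to include a full proof rather than a citation, you should consult \cite{C-S}, Chapter~II, where the argument is written out carefully.
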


It is worthy noting that all theorems above also hold in $T_{\alpha}$. see notes and remarks in X.A in \cite{C-S}. Due to Farah, start from Tsirelson space $T_{\alpha}$, Tsirelson ideals can be defined:
$$\mathscr{T}_{f,h,\alpha}=\textrm{Exh}(\tau_{f,h,\alpha})=I_{(t_{h(n)}),f}^T.$$.
When $\alpha$ is $1/2$, we write $\mathscr{T}_{f,h,1/2}$ to be $\mathscr{T}_{f,h}$. Farah studied this type of ideals thoroughly to refute a conjecture of Mazur and Kechris. Furthermore, He proved that every basis of turbulent orbit equivalence relations induced by continuous Polish group actions on Polish spaces is of size continuum. Here we only mentions his two propositions. For more details, please see \cite{F2} and \cite{F1}.

\begin{proposition}[\textup{Farah} \cite{F1}]
Each ideal $\mathscr{T}_{f,h}$ is different from $\mathscr{I}_{1/n}$.
\end{proposition}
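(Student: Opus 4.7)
I argue by contradiction: suppose $\mathscr{T}_{f,h}=\mathscr{I}_{1/n}$. The plan is to split into three cases based on the asymptotics of $f$, and to exhibit a distinguishing subset in each case.

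\emph{Case 1: $f(n)\not\to 0$.} Fix $\varepsilon>0$ with $f(n)\ge \varepsilon$ for infinitely many $n$, and extract a subsequence $A=\{n_k\}$ with $f(n_k)\ge \varepsilon$ and $n_k\ge 2^k$. Then $\|f(n_k)t_{h(n_k)}\|_T=f(n_k)\ge \varepsilon$, so the terms of $\sum_{n\in A}f(n)t_{h(n)}$ do not go to zero and the series diverges in $T$, giving $A\notin\mathscr{T}_{f,h}$. But $\sum_k 1/n_k\le \sum_k 2^{-k}<\infty$, so $A\in\mathscr{I}_{1/n}$, a contradiction.

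\emph{Case 2: $\sum_n f(n)<\infty$.} The triangle inequality gives $\|\sum_{n\in A}f(n)t_{h(n)}\|_T\le \sum_n f(n)<\infty$ for every $A\subseteq\mathbb{N}$, so $\mathscr{T}_{f,h}=P(\mathbb{N})$, which properly contains the proper ideal $\mathscr{I}_{1/n}$.

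\emph{Case 3: $f(n)\to 0$ and $\sum_n f(n)=\infty$.} This is the substantive case. The key ingredient is the admissibility clause of the Tsirelson norm, which combined with Proposition 5.2 yields, for any enumeration $A=\{a_1<a_2<\cdots\}$ and any block $[N,N+L)$ with $L\le h(a_N)$,
$$\Bigl\|\sum_{j=N}^{N+L-1}f(a_j)\,t_{h(a_j)}\Bigr\|_T \ge \tfrac{1}{2}\sum_{j=N}^{N+L-1}f(a_j).$$
Using this, I construct a distinguishing $A$: if $f$ decays sufficiently slowly relative to $1/n$, choose $A$ with $\sum_{n\in A}1/n<\infty$ while long admissible blocks of $\sum_{n\in A}f(n)$ stay bounded below, forcing divergence in $T$ and placing $A$ in $\mathscr{I}_{1/n}\setminus\mathscr{T}_{f,h}$; if $f$ decays sufficiently fast, choose $A$ with $\sum_{n\in A}f(n)<\infty$ and $\sum_{n\in A}1/n=\infty$, placing $A$ in $\mathscr{T}_{f,h}\setminus\mathscr{I}_{1/n}$.

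The main obstacle lies in the borderline regime of Case 3, where $f(n)\asymp 1/n$ and the crude estimates above no longer distinguish the two ideals. Here one must exploit the finer recursive structure of the Tsirelson submeasure $\tau_{f,h}$ (following Farah \cite{F1}, \cite{F2}) and ultimately invoke Tsirelson's theorem that $T$ contains no subspace isomorphic to $l_1$: were $\mathscr{T}_{f,h}=\mathscr{I}_{1/n}$, the natural identification $A\mapsto\sum_{n\in A}f(n)t_{h(n)}$ would transport the $l_1$-like structure of $\mathscr{I}_{1/n}$ onto the closed linear span of $\{f(n)t_{h(n)}\}_n\subseteq T$, producing an $l_1$-copy inside $T$ and contradicting Tsirelson's theorem.
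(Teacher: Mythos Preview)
The paper does not give its own proof of this proposition; it is quoted verbatim from Farah \cite{F1} and used as a black box. So there is no in-paper argument to compare against.

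Evaluating your proposal on its own terms: Cases 1 and 2 are fine, and the non-borderline sub-cases of Case 3 are plausible (though only sketched). The genuine gap is the borderline regime of Case 3. Your closing argument claims that $\mathscr{T}_{f,h}=\mathscr{I}_{1/n}$ would force an isomorphic copy of $l_1$ inside $T$. This inference is not justified. Equality of the two ideals is a statement about which \emph{subsets} $A\subseteq\mathbb{N}$ make $\sum_{n\in A}f(n)t_{h(n)}$ converge---that is, it concerns only $\{0,1\}$-valued coefficient sequences. An $l_1$-embedding requires equivalence of basic sequences, which is a statement about \emph{all} real scalar sequences. There is no general principle that promotes ``same convergence-support ideal'' to ``equivalent basic sequences'', even for unconditional bases; two unconditional sequences can generate the same ideal while living in totally incomparable spaces. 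You would need a bridge lemma here, and none is supplied or even hinted at.

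Farah's actual argument in \cite{F1} does not go through an $l_1$-embedding. It exploits a structural feature that distinguishes summable ideals (whose defining submeasure is additive) from Tsirelson ideals (whose submeasure $\tau_{f,h}$ is genuinely subadditive, reflecting the admissibility constraint $k\le E_1<\cdots<E_k$). Your sketch does not isolate or use such a distinguishing property; it reaches for Tsirelson's theorem on $l_1$ at exactly the point where something more specific to the submeasure is required.
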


Based on the lemma and the propositions above, we can proved that $\mathbb{R}^\mathbb{N}/l_1 \nleq_B E(T, (t_n))$. With Corollary 3.14, we finished the proof of Proposition 1.3.

\begin{proof}
(Proposition 1.3) If $\mathbb{R}^\mathbb{N}/l_1 \leq_B E(T, (t_n))$, by the lemma above, we can find a $f$, with $f(n)\rightarrow 0$, such that $I_{1/n}= I_{(e_{n}),f}^{l_1}=I_{(s_{n}),g}^T$ for some $g$ with $g(j)\rightarrow 0$, and a normalized block basis $\{s_j=\sum\limits_{i=l_j+1}^{l_{j+1}}a_it_i\}$.  From Proposition 5.2,  we know that $\{s_j\}$ is equivalent to $\{t_{k_j}\}$ for any $l_j<k_j \leq l_{j+1}$. Thus, $I_{1/n}= I_{(t_{k_n}),g}^T= \mathscr{T}_{k,g}$. A contradicition to Proposition 5.3.
\end{proof}

Now we are ready to prove Theorem 1.4. The following proposition is also due to Farah.
\begin{theorem}[\textup{Farah} \cite{F2}]
If both $\mathscr{T}_{f_1,h_1,\alpha}$ and $\mathscr{T}_{f_2,h_2,\beta}$, with $\alpha \neq \beta$, are turbulent, then they are different.
\end{theorem}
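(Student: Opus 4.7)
The plan is to argue by contradiction, using the total incomparability of $T_\alpha$ and $T_\beta$ for $\alpha\neq\beta$ as the ultimate obstruction. Suppose that both $\mathscr{T}_{f_1,h_1,\alpha}$ and $\mathscr{T}_{f_2,h_2,\beta}$ are turbulent---so by Remark~2.9, $f_1(n),f_2(n)\to 0$---and that the two ideals coincide as subsets of $P(\mathbb{N})$. Unpacking the definition, this says: for every $A\subseteq\mathbb{N}$, the series $\sum_{n\in A}f_1(n)t_{h_1(n)}^\alpha$ converges in $T_\alpha$ if and only if $\sum_{n\in A}f_2(n)t_{h_2(n)}^\beta$ converges in $T_\beta$. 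Write $Z_\alpha$ and $Z_\beta$ for the closed linear spans of $\{t_{h_1(n)}^\alpha\}$ in $T_\alpha$ and of $\{t_{h_2(n)}^\beta\}$ in $T_\beta$; both are infinite-dimensional subspaces, and by the $T_\alpha$/$T_\beta$ version of Proposition~5.2 they are isomorphic to the modified Tsirelson spaces $T_{\alpha,h_1}$ and $T_{\beta,h_2}$ respectively.

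The core step is to upgrade the ideal equality---a statement about $\{0,1\}$-coefficient convergence---into a Banach-space isomorphism $Z_\alpha\cong Z_\beta$. Unconditionality of both bases immediately extends the equivalence from $\{0,1\}$-coefficients to $\{-1,0,1\}$-coefficients. For an arbitrary bounded sequence $(a_n)$, I would decompose $a_n=\sum_{k\geq 0}2^{-k}\varepsilon_{n,k}$ with $\varepsilon_{n,k}\in\{-1,0,1\}$ and use unconditional constants in $T_\alpha$ and $T_\beta$ to control the partial sums levelwise, reducing the convergence of $\sum a_n f_1(n)t_{h_1(n)}^\alpha$ and $\sum a_n f_2(n)t_{h_2(n)}^\beta$ to countably many signed subsums governed by the ideal. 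The candidate coefficient map $\sum a_n f_1(n) t_{h_1(n)}^\alpha \mapsto \sum a_n f_2(n) t_{h_2(n)}^\beta$ is then well-defined on a dense subspace and preserves convergence; by Theorem~2.8 each $\mathscr{T}_{f_i,h_i,\alpha_i}$ carries a unique Polishable topology, so this map is Borel, and via the closed graph theorem bounded, yielding a Banach-space isomorphism between infinite-dimensional subspaces of $T_\alpha$ and $T_\beta$. This contradicts total incomparability, completing the proof.

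The main obstacle is the bootstrapping in the middle paragraph: the ideal records only $\{0,1\}$-coefficient convergence, and passing to arbitrary coefficients requires a genuine use of both unconditional structure and the Polishable topology to ensure the natural coefficient map is well-defined and bounded. A cleaner alternative, parallel in spirit to the proof of Proposition~1.3, is to run a symmetric variant of Lemma~5.1 starting from the ideal equality itself: extract a subsequence $\{t_{h_1(b_n)}^\alpha\}$ of $\{t_{h_1(n)}^\alpha\}$ and a normalized block basis $\{s_j\}$ of $\{t_{h_2(n)}^\beta\}$ whose coefficient ideals agree, then apply Proposition~5.2 inside $T_\beta$ to replace $\{s_j\}$ by an equivalent subsequence of $\{t_n^\beta\}$, and apply it again inside $T_\alpha$ to relate $\{t_{h_1(b_n)}^\alpha\}$ to a subsequence of $\{t_n^\alpha\}$, arriving at the forbidden isomorphism of infinite-dimensional subspaces across $T_\alpha$ and $T_\beta$.
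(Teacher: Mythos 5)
A preliminary remark on the comparison: the paper does not prove this statement at all --- it is quoted from Farah \cite{F2} and used as a black box in the proof of Theorem 1.4 --- so your proposal is really measured against Farah's original argument, which works directly with quantitative growth estimates for the submeasures $\tau_{f,h,\alpha}$ (how they scale like powers of $\alpha$ on iterated admissible unions), not with an isomorphism of subspaces of $T_\alpha$ and $T_\beta$.

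Your central step has a genuine gap, and it is exactly the one you flag: equality of the ideals records only which $\{0,1\}$-coefficient series converge, and this does not imply that the weighted sequences $\{f_1(n)t^{\alpha}_{h_1(n)}\}$ and $\{f_2(n)t^{\beta}_{h_2(n)}\}$ are equivalent, nor even that your coefficient map is everywhere defined. A concrete counterexample to the implication you need (at the level of generality at which you argue, i.e.\ using only unconditionality and Polishability): take $X=\ell_2$ with $f_1(n)=n^{-1/2}$ and $Y=\ell_4$ with $f_2(n)=n^{-1/4}$; both weights tend to $0$ and both ideals equal the summable ideal $\mathscr{I}_{1/n}$, yet for $a_n=(\log (n+2))^{-1/2}$ the series $\sum a_nf_2(n)e_n$ converges in $\ell_4$ while $\sum a_nf_1(n)e_n$ diverges in $\ell_2$, so the weighted bases are not equivalent and the coefficient map is not total. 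This also shows why the dyadic bootstrapping cannot work: each level set $B_k=\{n:\varepsilon_{n,k}=1\}$ does belong to the common ideal (unconditionality gives that much), but membership in the ideal carries no uniform bound on $\|\sum_{n\in B_k}f_2(n)t^{\beta}_{h_2(n)}\|$, so the resummation over $k$ need not converge --- and in the example it does not. The appeal to the closed graph theorem is circular for the same reason: it yields boundedness only after the map is known to be defined and linear on the whole closed span, which is precisely what is in question. The ``cleaner alternative'' inherits the same leap: running the Lemma 5.1 machinery and Proposition 5.2 only renormalizes the hypothesis into equality of two ideals of Tsirelson type with parameters $\alpha$ and $\beta$, and passing from that equality to the ``forbidden isomorphism'' of infinite-dimensional subspaces is again the unproved implication. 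A correct proof must use the specific Tsirelson structure of the submeasures to separate $\alpha$ from $\beta$; total incomparability of $T_\alpha$ and $T_\beta$, filtered only through ideal equality, is not enough.
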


\begin{proof}
(Theorem 1.4) If there is a $E(X,(x_n))$ Borel reducible to $E(T_{\alpha},(t_n^{\alpha}))$ and $E(T_{\beta},(t_n^{\beta}))$ with $\alpha \neq \beta$. We know that $X$ does not contain $c_0$, as $\mathbb{R}^\mathbb{N}/c_0 \nleq_B E(T_{\alpha},(t_n^{\alpha}))$ Then as in the proof of lemma 5.1, we can find a subsequence $\{x_{b_n}\}$ of $\{x_{n}\}$ and a reduction $\theta_1$ of the ``modular'' form $\theta_1(u)=T_1^{\smallfrown}(u(1))^{\smallfrown}T_2(u(2))^{\smallfrown}T_3(u(3))...$, witnesses that $E(X,(x_{b_n})) \leq_A E(T_{\alpha},(t_n^{\alpha}))$. From $\{x_{b_n}\}$  we can repeat this steps to find one of its subsequence $\{x_{b_{d_n}}\}$ and a reduction $\theta_2$ of the ``modular'' form witnessing that  $E(X,(x_{b_{d_n}})) \leq_A E(T_{\beta},(t_n^{\beta}))$. Using the lemma 5.1 and the proof above, we have for some $f$ there is a $g_1$ and $k_1$ such that $I_{(x_{b_{d_n}}),f}^X= \mathscr{T}_{k_1,g_1,\beta}$, which is turbulent.

In addition, consider the domain of $\theta_1$ in coordinate $b_{d_n}$, we can construct a reduction $\theta'_1$ from $2^\mathbb{N}$ as follows:
$$\theta'_1(a)=\vec{0}^{\frown}\vec{0}^{\frown}\ldots a(1)\cdot T_{d_1}(f(1))^{\frown}\vec{0}^{\frown}\ldots a(2)\cdot T_{d_2}(f(2))^{\frown}\ldots.$$
In fact, for any $a,b\in 2^N$, we can check that
$a\triangle b\in I_{(x_{b_{d_n}}),f}^X $ iff $\sum\limits_{a(i)\neq b(i)} f(i)x_{b_{d_i}}$ converges iff $ \phi(a)-\phi(b)\in {\rm coef}(X, (x_{b_{d_n}}))$ iff $\theta'_1(a)-\theta'_1(b)\in {\rm coef}(T_{\alpha},(t_n^{\alpha}))$ iff $\sum\limits_{j=1}^{\infty}|a(j)-b(j)|\cdot\|S_{j}\|s_{j}$ converges iff $a\triangle b\in I_{(s_n),g_2}^{T_{\alpha}} $, where $S_j=\sum\limits_{i=l_{d_j}+1}^{l_{d_j+1}}(T_{d_j}(f(j))(i))t_i^{\alpha}$, $s_j=S_j/\|S_j\|$ and $g_2(j)=\|S_j\|$.

Similar to the proof of Proposition 1.3, we can get any subsequences $\{t_{k_j}\}$, with $l_{d_j}<k_j \leq l_{d_j}+1$, equivalent to $\{s_j\}$. Choose such a subsequence and define $k_2(j)=k_j$. Thus, we have $I_{(x_{b_{d_n}}),f}^X= \mathscr{T}_{k_2,g_2,\alpha}$ and then $\mathscr{T}_{k_1,g_1,\beta}=\mathscr{T}_{k_2,g_2,\alpha}$. Both of them are turbulent.

A contradiction to theorem 5.4.

\end{proof}

Then we are ready to prove Corollary 1.5. Comparing to Farah's Theorem, as every $E(X,(x_n))$ being turbulent, Corollary 1.5 shows that the same argument also holds for a subclass of the turbulence equivalence relations induced by continuous actions. In fact, his proof also holds in our setting for Corollary 1.5. See the proof of Theorem 1.2 in \cite{F1}. However, to be self-contained, we would like to provide the proof here.

\begin{proof}
(Corollary 1.5) As there are only continuum many Borel equivalence relations, it suffices to prove that if $E(X_{\xi}, (x^{\xi}_n))$, where $\xi<\lambda<2^{\omega}$, are equivalence relations in $\mathcal{A}$, then there is some equivalence relation $E$ in $\mathcal{A}$ such that $E(X_{\xi}, (x^{\xi}_n)) \nleq_B E$ for all $\xi<\lambda$. Based on Theorem 1.4, we know that for every $\xi$, there is at most one $\alpha_{\xi}$ such that $E(X_{\xi}, (x^{\xi}_n)) \leq_B E(T_{\alpha_{\xi}},(t^{\alpha_{\xi}}_n))$. Fix a $\alpha$,which is different from all $\alpha_{\xi}$. Then, take the equivalence relation $E$ to be $E(T_{\alpha},(t^{\alpha}_n))$.
\end{proof}

\section{Acknowledgement}
This paper is the main part of my master thesis in Nankai University. I would like to thank my supervisor Longyun Ding for his guidance and I also want to thank my classmates Zhi Yin, Minggang Yu and Yukun Zhang for the inspiring discussions.

\end{document}